\documentclass[12pt]{article}
\usepackage{epsfig,amsmath,amsthm,amssymb,graphics,amsfonts,psfrag}
\usepackage[letterpaper,margin=1in]{geometry}

\theoremstyle{plain}
\newtheorem{thm}{Theorem}[section]

\newtheorem{prop}[thm]{Proposition}

\theoremstyle{definition}
\newtheorem{defn}[thm]{Definition}
\newtheorem{ex}[thm]{Example}

\def\R{\mathbb{R}}

\def\I{\infty}

\newcommand{\be}{\begin{equation}}
\newcommand{\ee}{\end{equation}}
\newcommand{\bea}{\begin{eqnarray}}
\newcommand{\eea}{\end{eqnarray}}
\newcommand{\beann}{\begin{eqnarray*}}
\newcommand{\eeann}{\end{eqnarray*}}
\newcommand{\benn}{\begin{equation*}}
\newcommand{\eenn}{\end{equation*}}

\def\ra{\rightarrow}
\def\I{\infty}

\begin{document}

\date{}
\title{Characterizing Slow Exit and Entrance Points}
\author{Christian Kuehn\thanks{Center for Applied Mathematics, Cornell University, email: ck274@cornell.edu}}

\maketitle

\begin{abstract}
Geometric Singular Perturbation Theory (GSPT) and Conley Index Theory are two powerful techniques to analyze dynamical systems. Conley already realized that using his index is easier for singular perturbation problems. In this paper, we will revisit Conley's results and prove that the GSPT technique of Fenichel Normal Form can be used to simplify the application of Conley index techniques even further. We also hope that our results provide a better bridge between the different fields. Furthermore we show how to interpret Conley's conditions in terms of averaging. The result are illustrated by the two-dimensional van der Pol equation and by a three-dimensional Morris-Lecar model.  
\end{abstract}

{\bf Keywords:} Fast-slow system, Conley index, Fenichel normal form, transversality. \\

{\bf 2010 Mathematics Subject Classification:}  34C26, 34E15, 37B30, 34A26, 34C45.

\section{Introduction}

We start by outlining the context and results of the paper. Singular perturbation theory often involves a distingushed parameter, usually denoted $\epsilon$, which is assumed to be small and positive. For $\epsilon=0$, the dynamical system is ``degenerate'' but it might also be easier to analyze. Ordinary differential equations (ODEs) with two different time scales, so-called fast-slow systems, are a very important class of singular perturbation problems. In this context, the parameter $\epsilon$ describes the separation of time scales. One successful strategy to analyze fast-slow systems is to understand the case $\epsilon=0$ and using this knowledge to try to prove perturbation results for $\epsilon>0$ sufficiently small; many geometric and asymptotic methods follow this pattern.\\

Conley Index Theory can be applied to wide classes of dynamical systems which do not have to be singularly perturbed. The goal is to convert the problem into an algebraic/topological question. Conley already proved that this approach can be substanially simplified for singular perturbation problems. Mischaikow and co-workers developed Conley's index theory for fast-slow systems recently even further. The problem with the broad applicability of the theory is that it is still very technical. It has been demonstrated in low-dimensional examples that easily applicable geometric versions of the theory should be possible. One possible generalization to higher dimensions has been proposed but requires a rather complicated and lengthy topological construction.\\

The goal of the current paper is to start building a simpler bridge between GSPT and Conley Index Theory; in particular, we are going to show that a fundamental result due to Conley can be simplified using Fenichel Normal Form. The result we prove shows that if we isolate an invariant set by a suitable neighbourhood $N$, then $N$ is of the form required by Conley Index Theory if the slow motion for $\epsilon=0$ is transverse to the boundary of $N$. We also investigate the case when the fast motion of the system has periodic orbits and discuss further generalizations. We hope that the new results will be of interest from the viewpoint of GSPT as well as Conley Index Theory.\\

The paper is structured as follows. In Section 2 we describe the background from fast-slow systems theory and in Section 3 a similar exposition is given for Conley Index Theory with a focus on the application to fast-slow systems. Both introductions are a little more detailed than strictly necessary to accomodate the two different perspectives with respect to background knowledge. In Section 4 we prove the main result for equilibrium points of the fast motion. In Section 5 a result for the case of periodic orbits is given and a generalization to any bounded invariant set is outlined. Furthermore the role of averaging is explained. In Section 6 we demonstrate the applicability of the result by two examples.   

\section{Fast-Slow Systems}

Several viewpoints have influenced the development of multiple time scale or fast-slow systems starting with asymptotic analysis \cite{MisRoz,Eckhaus} using techniques like matched asymptotic expansions \cite{KevorkianCole,Lagerstrom}. A geometric theory focusing on invariant manifolds was developed \cite{Fenichel4,Jones,TikhonovVasilevaSveshnikov} which is now commonly known as Fenichel theory due to Fenichel's seminal work \cite{Fenichel4}. There was also significant influence by a group using nonstandard analysis \cite{DienerDiener,BenoitCallotDienerDiener}.\\ 

We shall focus on the geometric viewpoint in this paper. The term ``Geometric Singular Perturbation Theory'' (GSPT) is used to encompass Fenichel theory and further geometric methods developed over the last three decades in the context of multiple time scale problems. The general formulation of a \textit{fast-slow system} of ordinary differential equations (ODEs) is
\be
\label{eq:basic1}
\begin{array}{lcl}
\epsilon \dot{x}&=&\epsilon\frac{dx}{d\tau}=f(x,y,\epsilon),\\
\dot{y}&=&\frac{dy}{d\tau}=g(x,y,\epsilon),\\
\end{array}
\ee
where $(x,y)\in\R^m\times \R^n$ and $\epsilon$ is a small parameter $0<\epsilon\ll 1$ representing the ratio of time scales. The functions $f:\R^m\times\R^n \times \R\ra \R^m$ and $g:\R^m\times\R^n\times \R\ra \R^n$ will be assumed to be sufficiently smooth. The variables $x$ are fast and the variables $y$ are slow and we can change in \eqref{eq:basic1} from the slow time scale $\tau$ to the fast time scale $t=\tau/\epsilon$ which yields:
\be
\label{eq:basic2}
\begin{array}{lcl}
x'&=&\frac{dx}{dt}=f(x,y,\epsilon),\\
y'&=&\frac{dy}{dt}=\epsilon g(x,y,\epsilon).\\
\end{array}
\ee 
We will also denote the vector field \eqref{eq:basic2} by $z'=F(z)$ where $F=(f,\epsilon g)$ and $z=(x,y)$. The first major idea to analyze \eqref{eq:basic1}-\eqref{eq:basic2} is to consider the \textit{singular limit} as $\epsilon\ra 0$. 

\begin{defn}
Setting $\epsilon=0$ in \eqref{eq:basic2} gives
\be
\label{eq:basic_fss}
\begin{array}{lcl}
x'&=&f(x,y,0),\\
y'&=&0,\\
\end{array}
\ee 
which is system of ODEs parametrized by the slow variables $y$. We call \eqref{eq:basic_fss} the \textit{fast subsystem} or \textit{layer equations}. The associated flow is called the \textit{fast flow}.
\end{defn}

\begin{defn}
Considering the singular limit $\epsilon =0$ for \eqref{eq:basic1} yields:
\be
\label{eq:basic_sf}
\begin{array}{lcl}
0&=&f(x,y,0),\\
\dot{y}&=&g(x,y,0).\\
\end{array}
\ee
System \eqref{eq:basic_sf} is a differential-algebraic equation (DAE) called \textit{slow subsystem} or \textit{reduced system}. The associated flow is called the \textit{slow flow}.
\end{defn}

One goal of GSPT is to use the fast and slow subsystems to understand the dynamics of the full system \eqref{eq:basic1}-\eqref{eq:basic2} for $\epsilon>0$.

\begin{defn} 
The algebraic constraint of \eqref{eq:basic_sf} defines the \textit{critical manifold}
\benn
C:=\{(x,y)\in\R^m\times \R^n|f(x,y,0)=0\}.
\eenn 
\end{defn}

Note that it is possible that $C$ is not an actual manifold \cite{KruSzm4} but we shall not consider this case here. The points in $C$ are equilibrium points for the fast subsystem \eqref{eq:basic_fss}. 

\begin{ex}
\label{ex:fss}
Consider the following very simple planar fast-slow system
\be
\label{eq:ex_fss}
\begin{array}{lcl}
\epsilon\dot{x}&=&y-x^2,\\
\dot{y}&=&-1.\\
\end{array}
\ee
The critical manifold $C=\{(x,y)\in\R^2:y=x^2\}$ is a parabola. Observe that the slow flow on $C$ is $\dot{y}=-1$ so that under this flow any initial condition on $C$ will ``flow down'' to the origin $(x,y)=(0,0)$; see single arrows in Figure \ref{fig:fig1}(a). The fast subsystem is $x'=y-x^2$ which has one stable equilibrium and one unstable one for $y>0$, a saddle-node (or fold) bifurcation for $y=0$ and no equilibria for $y<0$; the flow is indicated by double arrows in Figure \ref{fig:fig1}(a).     
\end{ex}

\begin{figure}[htbp]
\psfrag{x}{$x$}
\psfrag{y}{$y$}
\psfrag{C}{$C$}
\psfrag{a}{(a)}
\psfrag{b}{(b)}
\psfrag{gamma}{$\gamma$}
\psfrag{Meps}{$M_\epsilon$}
	\centering
		\includegraphics[width=0.9\textwidth]{./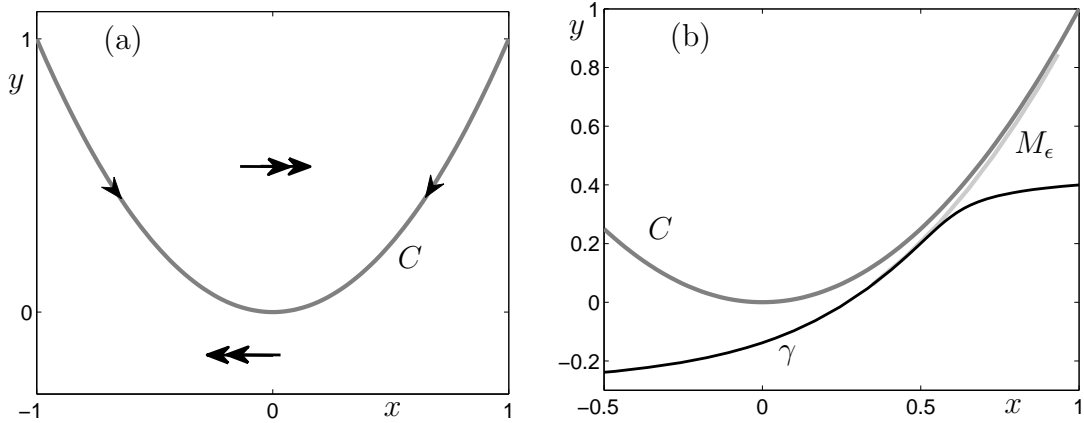}
	\caption{\label{fig:fig1}(a) Critical manifold $C$ (dark grey) of \eqref{eq:ex_fss}; the fast and slow flows are indicated by double and single arrows respectively. (b) $C$ (dark grey) and a slow manifold $M_\epsilon$ (light grey) obtained from Fenichel's Theorem \ref{thm:fenichel1} are shown together with a trajectory $\gamma$ of \eqref{eq:ex_fss} starting at $\gamma(0)=(1,0.4)$ at parameter value $\epsilon=0.05$. Observe that $\gamma$ quickly approaches $M_\epsilon$, then tracks it (actually $O(e^{-K/\epsilon})$-close) until the fast-slow structure breaks down near the fold point $(x,y)=(0,0)$.}	
\end{figure}

\begin{defn}
A subset $M\subset C$ is called \textit{normally hyperbolic} if the $m\times m$ matrix $(D_xf)(p)$ of first partial derivatives with respect to the fast variables has no eigenvalues with zero real part for all $p\in M$; this condition is equivalent to requiring that points $p\in S$ are hyperbolic equilibria of the fast subsystem \eqref{eq:basic_fss}. 
\end{defn}

We call a normally hyberbolic subset $M$ attracting if all eigenvalues of $(D_xf)(p)$ have negative real parts for $p\in M$; similarly $M$ is called repelling if all eigenvalues have positive real parts. If $M$ is normally hyperbolic and neither attracting nor repelling we say it is of saddle-type. A typical class of points where normal hyperbolicity fails are \textit{fold points}. They are defined as the points where the critical manifold $C$ is locally parabolic with respect to the fast directions. In other words, at a fold point $p_*$ one requires that $f(p_*, 0) = 0$ and that $(D_xf)(p_*, 0)$ is of rank $m - 1$ with left and right null vectors $w$ and $v$, such that $w\cdot [(D_{xx}f)(p)(v, v)] \ne 0$ and $w \cdot [(D_yf)(p)] \ne 0$. 

\begin{ex}(Example \ref{ex:fss} continued)
The critical manifold $C=\{y=x^2\}$ splits into one repelling part, one attracting part and a fold point at the origin:
\benn
C=C_l\cup\{(0,0)\} \cup C_r
\eenn
where $C_l=C\cap \{x<0\}$ is repelling, $C_r=C\cap \{x>0\}$ is attracting and $(x,y)=(0,0)$ is a fold point which is easily verified since
\benn
D_xf=\frac{\partial f}{\partial x}=-2x\qquad \text{and} \qquad D_{xx}f=\frac{\partial^2 f}{\partial x^2}=-2. 
\eenn
\end{ex}

We continue with the general case. If $(D_xf)$ has maximal rank the implicit function theorem applied to $f(x,y,0)=0$ locally provides a function $h(y)=x$ so that $C$ can be expressed as a graph. Hence the slow subsystem \eqref{eq:basic_sf} can be more succinctly expressed as:
\be
\label{eq:basic_sf1}
\dot{y}=g(h(y),y,0)
\ee    
We shall also refer to the flow induced by \eqref{eq:basic_sf1} as slow flow. To relate the dynamics of the slow flow to the dynamics of the full system for $\epsilon>0$ the next theorem is of fundamental importance.

\begin{thm}[\textit{Fenichel's Theorem}, \cite{Fenichel4,Jones}]
\label{thm:fenichel1}
Suppose $M=M_0$ is a compact normally hyperbolic submanifold of the critical manifold $C$. Then for $\epsilon>0$ sufficiently small the following holds:
\begin{itemize}
\item[(F1)] There exists a locally invariant manifold $M_\epsilon$ diffeomorphic to $M_0$. \textit{Local invariance} means that $M_\epsilon$ can have boundaries through which trajectories enter or leave.  
\item[(F2)] $M_\epsilon$ has a distance $O(\epsilon)$ from $M_0$.
\item[(F3)] The flow on $M_\epsilon$ converges to the slow flow as $\epsilon \ra 0$.
\item[(F4)] $M_\epsilon$ is $C^r$-smooth for any $r<\I$ (as long as $f,g\in C^\I$).
\item[(F5)] $M_\epsilon$ is normally hyperbolic and has the same stability properties with respect to the fast variables as $M_0$ (attracting, repelling or saddle-type).
\item[(F6)] For fixed $\epsilon>0$, $M_\epsilon$ is usually not unique but all manifolds satisfying (F1)-(F5) lie at a Hausdorff distance $O(e^{K/\epsilon})$ from each other for some $K>0$, $K=O(1)$. 
\end{itemize} 
We call a manifold $M_\epsilon$ a \textit{slow manifold}. Note that all asymptotic notation refers to $\epsilon\ra0$. The same conclusions as for $M_0$ hold (locally) for its stable and unstable manifolds:
\benn
W^s(M_0)=\bigcup_{p\in M_0} W^s(p),\qquad  W^u(M_0)=\bigcup_{p\in M_0} W^u(p)
\eenn
where we view points $p\in M_0$ as equilibria of the fast subsystem.
\end{thm}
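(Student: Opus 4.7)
The overall strategy is to reformulate the persistence statement as a graph-over-the-slow-variables problem and then invoke a fixed-point theorem. Because $M_0$ is compact and normally hyperbolic, a tubular neighbourhood together with the splitting induced by the spectrum of $(D_xf)$ lets us introduce local coordinates $(a,b,y)\in\R^{m_s}\times\R^{m_u}\times\R^n$ in which $M_0=\{a=0,b=0\}$ and the fast linearisation at $a=b=0$ is block-diagonal with one block having spectrum in $\{\text{Re}\,\lambda<0\}$ and one in $\{\text{Re}\,\lambda>0\}$. In these coordinates \eqref{eq:basic2} becomes
\be
\label{eq:proofplan_coords}
a'=A(y)a+F_s(a,b,y,\epsilon),\quad b'=B(y)b+F_u(a,b,y,\epsilon),\quad y'=\epsilon G(a,b,y,\epsilon),
\ee
where $F_{s,u}$ vanish to second order at $a=b=0$ when $\epsilon=0$. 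I would then seek $M_\epsilon$ locally as a graph $(a,b)=(h_s(y,\epsilon),h_u(y,\epsilon))$.

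For existence (F1), the plan is to apply the Perron/Lyapunov--Perron method: a Lipschitz graph $h=(h_s,h_u)$ is invariant iff every trajectory starting on it stays on it, which via variation of constants reduces to an integral equation of Lyapunov--Perron type obtained by integrating the stable block forward and the unstable block backward in fast time $t$. Cut off the nonlinearities $F_s,F_u,G$ outside a neighbourhood of $M_0$ so that they are globally small and Lipschitz, and work in the Banach space of bounded Lipschitz maps $h:\R^n\to\R^{m_s+m_u}$ with $h(y,0)=0$ and Lipschitz constant bounded by a fixed small $\delta$. For $\epsilon$ small enough the resulting operator is a contraction because the hyperbolic rates of $A,B$ dominate the $O(\epsilon)$ drift in $y$; the unique fixed point gives the graph $h$, and therefore $M_\epsilon$. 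The $O(\epsilon)$ estimate (F2) falls out of the fixed-point equation because the operator maps $0$ to something of size $O(\epsilon)$ (the $y$-equation carries a factor $\epsilon$, so the inhomogeneity in the integral equation is $O(\epsilon)$). The preservation of the normal-hyperbolic splitting and stability type (F5) then follows from continuity of $(D_xf)$ and the smallness of $\|h\|_{C^1}$.

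For smoothness (F4) I would use the fibre contraction theorem of Hirsch--Pugh--Shub: differentiating the fixed-point equation formally gives a second contraction on the space of candidate derivatives $Dh$, and iterating this argument $r$ times yields $C^r$ regularity for any $r<\infty$, with the usual loss-of-one-derivative per step that forces $r$ to stay finite relative to the smoothness of $f,g$ (hence the ``any $r<\infty$'' qualifier when $f,g\in C^\I$). The assertions about $W^s(M_0)$ and $W^u(M_0)$ are handled by repeating the whole construction on the extended system in which we look for invariant manifolds containing $M_\epsilon$ and tangent at $M_\epsilon$ to the stable/unstable bundles; this amounts to running the same Lyapunov--Perron argument on each block separately.

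The most delicate point, and the one I expect to be the main obstacle, is (F6), the non-uniqueness together with the $O(e^{-K/\epsilon})$ Hausdorff closeness. Different cut-offs of $F_s,F_u,G$ outside the tubular neighbourhood give genuinely different fixed points, so the manifold is not canonical. The exponential closeness is proved by comparing two candidate slow manifolds $M_\epsilon$ and $\widetilde M_\epsilon$ via the difference of their graphs: one shows this difference satisfies a linear non-autonomous equation whose contraction rate on any fast time interval of length $T$ is governed by the hyperbolic spectral gap, and one iterates on windows of length $T=O(1/\epsilon)$ dictated by how long trajectories remain inside the tube; the spectral gap then forces the difference to decay like $e^{-K/\epsilon}$. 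Careful bookkeeping of how the cut-off region, the slow drift, and the hyperbolic rates interact is what makes this step the technical heart of the theorem.
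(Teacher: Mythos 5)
The paper does not prove this theorem: it is quoted as a classical result with the proof deferred to the cited references (Fenichel's original paper and Jones's lecture notes), so there is no in-paper argument to compare against. Judged on its own terms, your outline is the standard Lyapunov--Perron route and is essentially the proof given in Jones's exposition: straighten $M_0$ and its normal bundle, cut off, set up the integral operator with the stable block integrated from $-\infty$ and the unstable block from $+\infty$, contract on Lipschitz graphs using the spectral gap between the $O(1)$ hyperbolic rates and the $O(\epsilon)$ tangential drift, get (F2) from the size of the inhomogeneity, (F4) from fibre contraction, and (F6) from comparing fixed points of different cut-offs over fast-time windows of length $O(1/\epsilon)$. (Your $O(e^{-K/\epsilon})$ is the correct statement; the paper's $O(e^{K/\epsilon})$ in (F6) is evidently a sign typo.) A few spots are thinner than they should be. First, your explanation of why $M_\epsilon$ is only $C^r$ for finite $r$ (``loss of one derivative per step'') is not the real mechanism: each application of the fibre contraction theorem requires a rate condition that, for slow manifolds, is satisfied for any fixed $r$ only if $\epsilon$ is small enough depending on $r$, so the threshold $\bar\epsilon(r)$ may shrink to $0$ as $r\to\infty$; that is what blocks $C^\infty$. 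Second, you never address (F3); it follows by substituting the graph into the slow equation and using (F2) together with smooth dependence on $\epsilon$, but it should be said. Third, treating $A(y)$, $B(y)$ as frozen sweeps under the rug that the linearization along a trajectory is non-autonomous (since $y$ moves by $O(\epsilon t)$ in fast time); one needs uniform exponential dichotomies over the compact $M_0$ and their roughness under the $O(\epsilon)$ perturbation to justify the contraction estimate. Finally, the cut-off is also what makes $M_\epsilon$ merely \emph{locally} invariant in the sense of (F1), which is worth stating explicitly rather than only invoking the cut-off for non-uniqueness in (F6).
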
 

Figure \ref{fig:fig1}(b) shows an typical scenario where Fenichel's Theorem applies; there we picked a compact submanifold $M_0\subset C_r$ and obtained an associated slow manifold. In addition to Fenichel's Theorem we can also find coordinate changes the simplify a fast-slow system considerably near a critical manifold.

\begin{thm}[\textit{Fenichel Normal Form}, \cite{Fenichel4,JonesKaperKopell}]
\label{thm:FNform}
Suppose the origin $0\in C$ is a normally hyberbolic point with $m_u$ unstable and $m_s$ stable fast directions; choose a sufficiently small compact normally hperbolic subset $M_0\subset C$ containing the origin. Then there exists a smooth invertible coordinate change $(x,y)\mapsto (a,b,v)\in\R^{m_u}\times \R^{m_s}\times \R^n$ in a neighbourhood of $0$ so that a fast-slow system \eqref{eq:basic2} can be written as
\bea
\label{eq:FNform1}
a'&=& \Lambda(a,b,v,\epsilon)a\nonumber,\\
b'&=& \Gamma(a,b,v,\epsilon)b,\\
v'&=& \epsilon(m(v,\epsilon)+H(a,b,v,\epsilon)ab),\nonumber
\eea
where $\Lambda$, $\Gamma$ are matrix-valued functions. $\Lambda$ has $m_u$ positive and $\Gamma$ has $m_u$ negative eigenvalues. $H$ is bilinear and given in coordinates by
\be
\label{eq:Hbilinear}
H_i(a,b,v,\epsilon)ab=\sum_{r=1}^{m_s}\sum_{s=1}^{m_u}H_{irs}a_r b_s.
\ee
\end{thm}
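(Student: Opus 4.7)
The plan is to construct the coordinates in stages, each realized by a smooth local change of variables near the origin. First I would straighten the critical manifold: since $0\in C$ is normally hyperbolic, the implicit function theorem applied to $f(x,y,0)=0$ produces a smooth $h$ with $M_0=\{x=h(y)\}$ in a neighbourhood, and replacing $x$ by $x-h(y)$ moves $M_0$ onto $\{x=0\}$. Next, using the smooth spectral splitting of $(D_xf)(h(y),y,0)$ into its stable and unstable eigenspaces, I would apply a linear (in $x$) change with smooth $y$-dependent coefficients to write $x=(a,b)\in\R^{m_u}\times\R^{m_s}$, so that the linearization of the fast field at the critical manifold is block-diagonal. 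At this stage $M_0$ corresponds to $\{a=0,\,b=0\}$, but the Fenichel stable and unstable manifolds $W^s(M_\epsilon)$ and $W^u(M_\epsilon)$ are only tangent to the coordinate hyperplanes.

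The heart of the argument is the straightening of these invariant manifolds. By Theorem \ref{thm:fenichel1}, applied both to $M_\epsilon$ and to its stable/unstable manifolds, $W^s(M_\epsilon)$ is a smooth locally invariant manifold tangent to the $b$-direction and fibred over $M_\epsilon$, and similarly for $W^u(M_\epsilon)$. I would then choose a smooth change of the form $(a,b,v)\mapsto(\tilde a(a,b,v,\epsilon),\tilde b(a,b,v,\epsilon),v)$, identity on $M_\epsilon$ to leading order, that simultaneously sends $W^s(M_\epsilon)$ to $\{a=0\}$ and $W^u(M_\epsilon)$ to $\{b=0\}$. Invariance of these two hyperplanes under the flow forces $a'$ to vanish whenever $a=0$ and $b'$ to vanish whenever $b=0$; a Hadamard lemma (Taylor's theorem in integral form, applied componentwise) then factors the fast equations as $a'=\Lambda(a,b,v,\epsilon)a$ and $b'=\Gamma(a,b,v,\epsilon)b$ with smooth matrix-valued $\Lambda,\Gamma$. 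The spectral claims on $\Lambda$ and $\Gamma$ follow by evaluating at $a=b=0$, where they reduce to the unstable and stable blocks of $D_xf$, and then invoking continuity on a small neighbourhood.

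It remains to arrange the $v$-equation. Writing $v'=\epsilon G(a,b,v,\epsilon)$, we set $m(v,\epsilon):=G(0,0,v,\epsilon)$, so that $v'=\epsilon m(v,\epsilon)$ on $\{a=b=0\}$ by construction. A further change affecting only $v$, of the triangular form $(a,b,v)\mapsto(a,b,\phi(a,b,v,\epsilon))$, preserves the straightenings $\{a=0\}$ and $\{b=0\}$, and can be chosen so that the drift on $W^s(M_\epsilon)\cup W^u(M_\epsilon)$ agrees with the slow flow, i.e.\ $G(0,b,v,\epsilon)=G(a,0,v,\epsilon)=m(v,\epsilon)$. Once this is achieved, applying the Hadamard lemma in $a$ to $G-m$ produces a factor of $a$, and a second application in $b$ to the resulting function (which still vanishes at $b=0$) produces a factor of $b$; collecting terms yields the bilinear expression \eqref{eq:Hbilinear}.

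The main obstacle is this last $v$-adjustment: constructing $\phi$ so that it both preserves the two coordinate hyperplanes and matches the drift on both Fenichel manifolds to the common slow flow $m(v,\epsilon)$. The natural approach is to use the smooth Fenichel fibrations of $W^{s}(M_\epsilon)$ and $W^u(M_\epsilon)$ over $M_\epsilon$ to define basepoint projections and redefine the $v$-coordinate of a point near $W^s\cup W^u$ as the $v$-coordinate of its basepoint. Because the two fibrations intersect consistently along $M_\epsilon$ itself, the two prescriptions agree on $M_\epsilon$ and can be extended smoothly off $W^s\cup W^u$ — for instance via an implicit function or averaging construction — yielding the desired $\phi$ and completing the normal form.
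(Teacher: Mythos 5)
First, a point of comparison: the paper does not prove Theorem \ref{thm:FNform} at all --- it is quoted from the cited references (Fenichel, Jones--Kaper--Kopell) --- so there is no in-paper argument to measure you against. Judged on its own, your outline reproduces the standard proof architecture from those references: straighten the slow manifold and block-diagonalize the fast linearization, straighten $W^s(M_\epsilon)$ and $W^u(M_\epsilon)$ to $\{a=0\}$ and $\{b=0\}$ so that Hadamard's lemma yields $a'=\Lambda a$ and $b'=\Gamma b$, and then straighten the Fenichel fibrations so that the slow drift on $W^s(M_\epsilon)\cup W^u(M_\epsilon)$ coincides with the base-point flow, after which the double Hadamard argument in $a$ and then in $b$ gives the bilinear remainder \eqref{eq:Hbilinear}. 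You correctly identify the fibration straightening as the step that produces the $ab$ factorization; that is the essential insight.

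Two steps are glossed over. (i) The prefactor $\epsilon$ in the $v$-equation is not automatic: your final coordinate change mixes $v$ with $(a,b)$ through the base-point projections, and $a'$, $b'$ are $O(1)$, so simply ``writing $v'=\epsilon G$'' begs the question. The fix is to note that $M_\epsilon$, $W^{s,u}(M_\epsilon)$ and their fibrations depend $C^r$-smoothly on $\epsilon$ down to $\epsilon=0$, and that at $\epsilon=0$ each fiber lies in a level set $\{y=\mathrm{const}\}$, so the new slow coordinate reduces to $y$ at $\epsilon=0$ and is a first integral of the layer problem; hence $v'|_{\epsilon=0}\equiv 0$ and Hadamard in $\epsilon$ supplies the factor $\epsilon$. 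Relatedly, your first step should straighten $M_\epsilon$ rather than $M_0$: only the former makes $\{a=b=0\}$ invariant for $\epsilon>0$, which is what $a'=\Lambda a$, $b'=\Gamma b$ asserts. (ii) The ``main obstacle'' you name --- extending the base-point redefinition of $v$ off $W^s\cup W^u$ --- is in fact the easy part: with $W^s=\{a=0\}$ and $W^u=\{b=0\}$, the inclusion--exclusion formula $\tilde v=v^s(b,v)+v^u(a,v)-v$ is a smooth extension agreeing with both prescriptions (which coincide on $M_\epsilon$) and is a local diffeomorphism since $\tilde v=v$ to leading order. What genuinely needs justification is that after this change $G(0,b,v,\epsilon)$ is independent of $b$ and $G(a,0,v,\epsilon)$ of $a$; this rests on the equivariance $\pi^s\circ\phi_t^\epsilon=\phi_t^\epsilon\circ\pi^s$ of the fiber projections (fibers are carried to fibers), which is a stronger statement than the tangency and invariance properties listed in Theorem \ref{thm:fenichel1} and should be invoked explicitly. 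Finally, two cosmetic points about the statement itself: ``eigenvalues'' should read ``eigenvalues with positive (resp.\ negative) real part'', and ``$\Gamma$ has $m_u$ negative eigenvalues'' is a typo for $m_s$.
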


The situation is illustrated in Figure \ref{fig:fig2}. The manifold $M_0$ perturbs to a slow manifold $M_\epsilon$ by Fenichel's Theorem. Then this slow manifold is ``straightened'' together with its stable and unstable manifolds that become coordinate planes.\\

\begin{figure}[htbp]
	\centering 
\psfrag{unstablefib}{unstable directions}
\psfrag{stablefib}{stable directions}
\psfrag{a1=0}{$\{a=0\}=W^s(M_\epsilon)$}
\psfrag{a2=0}{$\{b=0\}=W^u(M_\epsilon)$}
\psfrag{Meps}{$M_\epsilon$}
\psfrag{y}{$v$}
\psfrag{a2}{$b$}
\psfrag{a1}{$a$}                       
		\includegraphics[scale=0.8]{./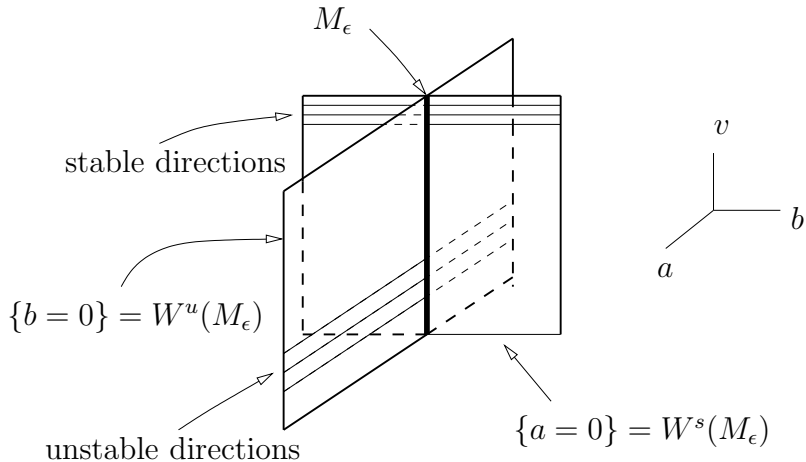}
	\caption{Illustration of Theorem \ref{thm:FNform}.}
	\label{fig:fig2}
\end{figure}

As a major overall conlusion of Fenichel Theory we get that the flow near a normally hyperbolic critical manifold is ``completely determined'' by the singular limit systems for $\epsilon=0$. We will show in Section \ref{sec:eepts} how this result reappears for the Conley index theory of fast-slow systems.

\section{Conley Index Theory}

In this section we describe the basic constructions from Conley Index Theory and how these have been adapted to fast-slow systems. When Conley \cite{ConleyEaston} studied these techniques that now bear his name it seems possible \cite{ConleyFastSlow1} that he also had applications to singular perturbation problems in mind. The idea of the theory is to convert a dynamical problem (e.g. ``Does this dynamical system have a heteroclinic orbit'') into an algebraic problem (e.g. ``What is the structure of a matrix?''). Several successful applications exist; see \cite{MischaikowMrozek} for a recent survey. We are going to outline only the basic techniques of the theory focusing on isolating neighbourhoods for fast-slow ODEs. In this context, the current theory can be found in \cite{Conley,MischaikowMrozekReineck,ConleyFastSlow1,ConleyFastSlow2,KokubuMischaikowOka}. For a more detailed introduction in the case of general finite-dimensional dynamical systems we refer to \cite{MischaikowMrozek}; the infinite-dimensional case is considered in \cite{Rybakowski}.\\

Let $\phi:\R\times \R^k \ra \R^k$ be a flow with $\phi=\phi(t,z)$. A compact set $N\subset \R^k$ is called an \textit{isolating neighbourhood} if 
\benn
\text{Inv}(N,\phi):=\{z\in\R^k|\phi(\R,z)\subset N\}\subset \text{int}(N)
\eenn  
where $\text{int}(N)$ denotes the interior of $N$. If we set $S:=\text{Inv}(N,\phi)$ then $S$ is called an \textit{isolated invariant set}. The situation is illustrated in Figure \ref{fig:fig3} where $\text{Inv}(N,\phi)$ is an unstable node.  

\begin{figure}[htbp]
\psfrag{N}{$N$}
\psfrag{L}{$L$}
\psfrag{q}{$q$}
	\centering
		\includegraphics[width=0.25\textwidth]{./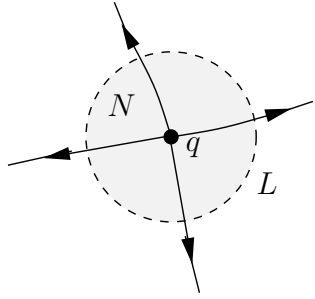}
	\caption{\label{fig:fig3}An isolating neighbourhood $N$ (shaded disk) of an unstable node $q$ is shown. The node $q$ is the invariant set of of $N$ i.e. $\text{Inv}(N,\phi)=\{q\}$. The boundary of $N$ (dashed circle) is denoted by $L$ and $(N,L)$ is easily seen to form an index pair.}	
\end{figure}

\begin{defn}
\label{defn:indexpair}
Let $S$ be an isolated invariant set. A pair of compact sets $(N,L)$ with $L\subset N$ is called an \textit{index pair} for $S$ if the following conditions hold:
\begin{enumerate}
 \item[(a)] $S=\text{Inv}(\text{cl}(N-L))$ and $N-L$ is a neighbourhood of $S$.
 \item[(b)] $L$ is positively invariant in $N$ i.e. for any $z\in L$ and $\phi([0,t],z)\subset N$ then $\phi([0,t],z)\subset L$.
 \item[(c)] $L$ is an \textit{exit set} for $N$ i.e. for any $z\in N$ and $t_1>0$ such that $\phi(t_1,z)\not\in N$ then there exists $t_0\in[0,t_1]$ for which $\phi([0,t_0],z)\subset N$ and $\phi(t_0,z)\in L$. 
\end{enumerate}
\end{defn} 

We define the \textit{Conley index} of $S$ as 
\benn
CH_*(S):=H_*(N,L)
\eenn
where $H_*$ is relative homology \cite{Hatcher,Spanier}. Note that an alternative way to define the Conley index would be to consider cohomology and set $CH^*(S):=H^*(N,L)$. The strategy to use the Conley index for dynamical systems usually proceeds along the following lines:
\begin{enumerate}
 \item[(S1)] Find an isolating neighbourhood $N$.
 \item[(S2)] Determine an index pair $(N,L)$.
 \item[(S3)] Calculate the Conley index.
 \item[(S4)] Use the calculation to prove a result about $\text{Inv}(N,\phi)$.
\end{enumerate}  
We will focus on (S1)-(S2) in the context of fast-slow systems. The main question is whether we can use the fast-slow structure to find an index pair. Re-writing a general fast-slow system \eqref{eq:basic2} on the fast time scale with $z=(x,y)$ will be convient
\be
\label{eq:sipsumform}
z'=F_0(z)+\sum_{i=1}^j \epsilon^i F_i(z)+o(\epsilon^j).
\ee
We denote the flow of \eqref{eq:sipsumform} by $\phi_\epsilon:\R\times \R^{m+n}\ra \R^{m+n}$. Observe that $\phi_0$ is the flow of the fast subsystem \eqref{eq:basic_fss}. It is easy to see that if $N$ is an isolating neighbourhood for $\phi_0$ then it is also an isolating neighbourhood for $\phi_\epsilon$. The problem is that usually $N$ will not be an isolating neighbourhood for $\epsilon=0$ but it still can be an isolating neighbourhood $\epsilon>0$. 

\begin{defn}
A compact set $N\subset \R^{m+n}$ is called a \textit{singular isolating neighbourhood} if $N$ is not an isolating neighbourhood for $\phi_0$ but there exists $\bar{\epsilon}$ such that $N$ is an isolating neighbourhood for $\phi_\epsilon$ with $\epsilon\in (0,\bar{\epsilon}]$.
\end{defn}

The next example illustrates, without proof, a singular isolating neighbourhood in a fast-slow system.

\begin{ex}
\label{ex:VdPb}
A time reversed version of Van der Pol's \cite{vanderPol,vanderPol1} equation is
\be
\label{eq:VdPsip_b}
\begin{array}{lcl}
x'&=&x^3/3-x-y,\\
y'&=&\epsilon x.\\
\end{array}
\ee
We shall consider the Van der Pol equation in more detail in Section \ref{sec:VdP}. For now it is useful to look ahead to Figure \ref{fig:VdP} that shows the critical manifold $C_0$ of \eqref{eq:VdPsip_b} and an orbit for $\epsilon=0$ composed of fast and slow subsystem trajectories. To prove that this orbit perturbs we want to construct a singular isolating neighbourhood; the dashed lines in Figure \ref{fig:VdP} indicate a possible guess for a such a neighbourhood $N$. In Section \ref{sec:VdP} we are going to prove that $N$ is a singular isolating neighbourhood. For now it is important to observe that $N$ is not an isolating neighbourhood for $\epsilon=0$ on the fast time scale (check it!). 
\end{ex}

To check whether a compact set is a singular isolating neighbourhood we temporalily decide to define the complications away.

\begin{defn}
\label{defn:sept}
Let $N$ be a compact set and let $z\in \text{Inv}(N,\phi_0)=:S$. We say that $z$ is a \textit{slow exit (entrance) point} if there is a neighbourhood $U$ of $z$ and an $\bar{\epsilon}>0$ such that for all $\epsilon\in (0,\bar{\epsilon}]$ there is a time $T(\epsilon,U)>0$ ($T(\epsilon,U)<0$) such that 
\benn
\phi_\epsilon(T(\epsilon,U),U)\cap U=\emptyset.
\eenn  
Let $S^-$ $(S^+)$ denote the set of slow exit (entrance) points. Furthermore define the following sets
\benn
S_\partial :=S\cap \partial N \qquad \text{and} \qquad S_\partial^\pm:=S_\partial \cap S^\pm.
\eenn
\end{defn}

To understand what slow exit and entrance points are and what characterizes them is the main goal of this paper and should be clear after Section \ref{sec:eepts}. Obviously Definition \ref{defn:sept} `cheats' by prescribing the dynamics of the slow motion under perturbation. Therefore the next result is very easy to prove.

\begin{thm}
\label{thm:sinbhd}
If $\text{Inv}(N,\phi_0)\cap \partial N$ consists of slow exit and entrance points then $N$ is a singular isolating neighbourhood i.e. it is an isolating neighbourhood for the full fast-slow system for sufficiently small $\epsilon>0$.  
\end{thm}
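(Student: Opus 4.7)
The plan is a compactness-and-contradiction argument that leverages how Definition~\ref{defn:sept} has already prescribed the perturbed dynamics. Suppose toward contradiction that $N$ fails to be an isolating neighbourhood for $\phi_\epsilon$ along some sequence $\epsilon_k \downarrow 0$; then one can pick $z_k \in \text{Inv}(N,\phi_{\epsilon_k}) \cap \partial N$ for each $k$, and by compactness of $\partial N$ extract a subsequence with $z_k \to z_\star \in \partial N$.

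The first real step is to promote $z_\star$ to a point of $\text{Inv}(N,\phi_0)$. For every fixed $t \in \R$, invariance gives $\phi_{\epsilon_k}(t,z_k) \in N$; continuous dependence of $\phi_\epsilon(t,z)$ on $(\epsilon,z)$ yields $\phi_{\epsilon_k}(t,z_k) \to \phi_0(t,z_\star)$, and closedness of $N$ gives $\phi_0(t,z_\star) \in N$. Since $t$ was arbitrary, $z_\star \in S_\partial$, so by hypothesis $z_\star$ is a slow exit or slow entrance point.

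Treat the slow exit case; the entrance case is identical with time reversed. Definition~\ref{defn:sept} supplies a neighbourhood $U \ni z_\star$, some $\bar\epsilon>0$, and times $T(\epsilon,U)>0$ with $\phi_\epsilon(T(\epsilon,U),U)\cap U=\emptyset$ for every $\epsilon \in (0,\bar\epsilon]$. For $k$ large, $z_k \in U$ and $\epsilon_k \leq \bar\epsilon$, so applying the slow exit clause to $z_k$ forces $\phi_{\epsilon_k}(T(\epsilon_k,U),z_k) \notin U$, which for a suitably chosen small $U$ around $z_\star$ is incompatible with the trajectory of $z_k$ remaining in $N$ for all time. This contradicts $\phi_{\epsilon_k}(\R,z_k)\subset N$ and completes the argument.

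The main obstacle is conceptual rather than technical: compactness and continuous dependence do all the heavy lifting, and Definition~\ref{defn:sept} was engineered precisely so that the implication ``slow exit point $\Rightarrow$ the perturbed trajectory leaves $N$'' is essentially built in --- this is the ``cheat'' the author flags. The genuine difficulty, which Sections 4--5 then address via Fenichel normal form, is to characterise which boundary points actually are slow exit/entrance points.
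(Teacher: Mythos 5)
Your overall architecture --- contradiction along a sequence $\epsilon_k\downarrow 0$, compactness of $\partial N$, and continuous dependence on $(\epsilon,z)$ over compact time intervals to promote the limit $z_\star$ to a point of $\text{Inv}(N,\phi_0)\cap\partial N$ --- is exactly the intended argument; the paper does not even write the proof out, remarking only that Definition~\ref{defn:sept} ``cheats'' so the theorem is very easy. Your first two steps are correct (the $\phi_0$-orbit of $z_\star$ exists for all time because it is trapped in the compact set $N$, and closedness of $N$ passes the inclusion to the limit).

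The gap is in the punchline. Definition~\ref{defn:sept} as printed gives you $\phi_{\epsilon_k}(T(\epsilon_k,U),U)\cap U=\emptyset$, hence only $\phi_{\epsilon_k}(T(\epsilon_k,U),z_k)\notin U$. Escaping $U$ is \emph{not} incompatible with the full orbit of $z_k$ remaining in $N$: the orbit could simply drift into $\text{int}(N)$, or $z_k$ could lie on a $\phi_{\epsilon_k}$-periodic orbit contained in $N$ that merely passes through $U$ (it leaves $U$ at the one prescribed time $T$ and returns later, which the definition permits). Your phrase ``for a suitably chosen small $U$'' does not repair this: the definition hands you \emph{some} neighbourhood $U$, you do not get to shrink it, and in any case no choice of small $U$ turns ``leaves $U$'' into ``leaves $N$.'' The statement only becomes a tautological consequence of the definition if the displayed condition reads $\phi_\epsilon(T(\epsilon,U),U)\cap N=\emptyset$, which is the form used by Conley and by Mischaikow--Mrozek--Reineck and is almost certainly what is intended here (a typo in Definition~\ref{defn:sept}). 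With that reading, $\phi_{\epsilon_k}(T(\epsilon_k,U),z_k)\notin N$ contradicts $z_k\in\text{Inv}(N,\phi_{\epsilon_k})$ immediately, and the entrance case follows by applying the same contradiction to the backward orbit with $T<0$. So: right route, but the final inference as you wrote it does not follow from the definition you quoted; you need the stronger ($\cap\,N=\emptyset$) form of the exit condition, and you should say so explicitly rather than appeal to a small $U$.
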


Hence we have reduced the problem to characterizing slow exit/entrance points in a more computable way. The next definition provides a technical notion which will be necessary for this task.

\begin{defn}
The \textit{average} of a function $h$ on $S\subset \R^{m+n}$, denoted $\text{Avg}(h,S)$, is the limit as $T\ra \I$ of the set of numbers
\benn
\left\{\left.\frac1T \int_0^T h(\phi_0(s,z))ds\right|z\in S\right\}.
\eenn 
We say that $h$ has \textit{strictly positive averages} on $S$ if $\text{Avg}(h,S)\subset (0,\I)$.
\end{defn}

In a seminal paper \cite{Conley} Conley was able to give computable conditions for slow exit and entry points. 

\begin{thm}(\cite{Conley})
\label{thm:csept}
A point $z\in S$ is a slow exit point if there exists a compact set $K_z\subset S$ invariant under $\phi_0$, a neighbourhood $U_z$ of the chain recurrent set $\mathcal{R}(K_z)$ of $K_z$, an $\bar{\epsilon}>0$ and a function $l:\text{cl}(U_z)\times [0,\bar{\epsilon}]\ra \R$ such that the following conditions are satisfied:
\begin{enumerate}
 \item[(a)] $\omega(z,\phi_0)=\bigcap_{t\in \R}\text{cl}\left(\{\phi_0(s,z):s>t\}\right)\subset K_z$; here $\text{cl}(.)$ denotes closure.
 \item[(b)] $l$ is of the form $l(w,\epsilon)=l_0(w)+\epsilon l_1(w)+\ldots+\epsilon^j l_j(w)$.
 \item[(c)] If $L_0=\{w|l_0(w)=0\}$ then $K_z\cap \text{cl}(U_z)=S\cap L_0\cap \text{cl}(U_z)$ and furthermore $l_0|_{S\cap \text{cl}(U_z)}\leq 0$.
 \item[(d)] Let $G_j(w)=\nabla_z l_0(w)\cdot F_j(w)+\nabla_z l_1(w)\cdot F_{j-1}(w)+\ldots+\nabla_z l_j(w)\cdot F_0(w)$. Then for some $k$, $G_k=0$ if $k<j$ and $G_j$ has strictly positive averages on $\mathcal{R}(K_z)$.
\end{enumerate}
A point is a slow entrance point if the same conditions hold under reversal of time. Points that satisfy (a)-(d) (satisfy the conditions under time reversal) are called \textit{C-slow exit (entrance) points}. The compact set $K_z$ is called a \textit{slow exit guide}.
\end{thm}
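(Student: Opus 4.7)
My plan is to show that along any $\phi_\epsilon$-trajectory starting near $z$, the function $w\mapsto l(w,\epsilon)$ is driven strictly upward by the perturbation, and that this upward drift forces the trajectory to leave a prescribed neighbourhood $U$ of $z$ in finite (albeit long) time, which is exactly the slow-exit condition of Definition \ref{defn:sept}. The structure of the proof mirrors a Lyapunov argument, where $l$ plays the role of a Lyapunov function whose derivative is of order $\epsilon^j$.

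First I would use (a) together with standard continuous dependence of $\phi_\epsilon$ on $\epsilon$ on bounded time intervals to reduce to studying trajectories that begin in an arbitrarily small neighbourhood of $\mathcal{R}(K_z)$ inside $U_z$: the forward $\phi_0$-orbit of $z$ lands in $K_z$ by (a), and any $\omega$-limit in $K_z$ must cluster on $\mathcal{R}(K_z)$ by Conley's fundamental decomposition of invariant sets. Second, I would compute $\tfrac{d}{dt}l(\phi_\epsilon(t,w),\epsilon)$ by the chain rule. Grouping the result by powers of $\epsilon$, the coefficient of $\epsilon^k$ is exactly the expression $G_k$ defined in (d). By hypothesis, $G_k\equiv 0$ for $k<j$, so
\benn
\frac{d}{dt}l(\phi_\epsilon(t,w),\epsilon)=\epsilon^j\, G_j(\phi_\epsilon(t,w))+O(\epsilon^{j+1}).
\eenn
By (c) we have $l_0\equiv 0$ on $K_z\cap\text{cl}(U_z)$ and $l_0\leq 0$ throughout $S\cap\text{cl}(U_z)$, so initially $l(w,\epsilon)$ is nonpositive and of size at most $O(\epsilon)$.

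The core step is an averaging estimate: over a time window of length $T=\Theta(\epsilon^{-j})$, the $\phi_\epsilon$-trajectory stays $o(1)$-close to the $\phi_0$-orbit of $w$ (this uses Gronwall on the fast system together with $j\geq 1$), and the strictly positive average hypothesis on $G_j$ gives a uniform lower bound
\benn
\epsilon^j\int_0^T G_j(\phi_\epsilon(s,w))\,ds\ \geq\ \alpha\ >\ 0
\eenn
for some constant $\alpha$ independent of $\epsilon$ and of the chosen starting point in a small neighbourhood of $\mathcal{R}(K_z)$. Consequently $l(\phi_\epsilon(T,w),\epsilon)$ grows by a definite positive amount, which contradicts the inequality $l\leq O(\epsilon)$ valid on $S\cap\text{cl}(U_z)$. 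Hence the trajectory must leave $\text{cl}(U_z)$; a compactness/shrinking-neighbourhood argument then produces the required $U\ni z$ and exit time $T(\epsilon,U)$.

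The main obstacle is the averaging estimate on the chain-recurrent set $\mathcal{R}(K_z)$. Unlike classical averaging over a single periodic orbit or a uniquely ergodic invariant set, here $\mathcal{R}(K_z)$ may carry many distinct ergodic components and the trajectory of $\phi_\epsilon$ can drift between them via pseudo-orbit chains. Turning the pointwise hypothesis ``$\text{Avg}(G_j,\mathcal{R}(K_z))\subset(0,\infty)$'' into a uniform-in-starting-point lower bound on time integrals of length $\Theta(\epsilon^{-j})$ therefore requires replacing true orbits by $\epsilon$-pseudo-orbits, controlling the drift between recurrence classes, and matching the error of the fast-slow perturbation to the timescale of the averaging window. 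This is the subtle technical heart of the argument and is where the interaction between the perturbation order $j$ in (b)--(d) and the chain-recurrent structure in (a) genuinely enters. The remaining algebraic pieces — the chain-rule expansion and the translation of ``$l>0$'' into ``trajectory has escaped'' — are then routine.
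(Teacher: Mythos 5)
A preliminary remark: the paper contains no proof of Theorem \ref{thm:csept} at all --- it is imported verbatim from Conley's work \cite{Conley} and used as a black box (the paper's own contribution, Theorem \ref{thm:eepts}, consists in \emph{verifying} hypotheses (a)--(d) via the Fenichel normal form). So your proposal can only be measured against Conley's original argument. Its overall shape --- treat $l$ as a Lyapunov-type function, expand $\frac{d}{dt}l(\phi_\epsilon(t,w),\epsilon)$ in powers of $\epsilon$ so that the coefficient of $\epsilon^k$ is $G_k$, use (d) to extract a net upward drift of order $\epsilon^j$, and conclude exit after time $O(\epsilon^{-j})$ --- is the right one, and the chain-rule bookkeeping is correct.

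There are, however, concrete gaps. First, the claim that Gronwall keeps the $\phi_\epsilon$-trajectory $o(1)$-close to the $\phi_0$-orbit over a window of length $\Theta(\epsilon^{-j})$ is false: Gronwall gives a divergence of order $\epsilon t e^{Lt}$, which is small only for $t=O(|\log\epsilon|)$. The correct mechanism fixes a window length $T_0$ \emph{independent} of $\epsilon$, uses compactness of $\mathcal{R}(K_z)$ and the strictly-positive-averages hypothesis to get $\frac{1}{T_0}\int_0^{T_0}G_j(\phi_0(s,w))\,ds\geq\alpha>0$ uniformly on a neighbourhood of $\mathcal{R}(K_z)$, and then concatenates $O(\epsilon^{-j})$ such windows, restarting the unperturbed comparison orbit at the start of each; only $O(1)$-time closeness is ever needed. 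Second, your contradiction invokes ``$l\leq O(\epsilon)$ on $S\cap\text{cl}(U_z)$'', but the perturbed trajectory does not lie in $S=\text{Inv}(N,\phi_0)$, so that bound does not apply to it; the contradiction must come from boundedness of $l$ on the compact set $\text{cl}(U_z)$ together with monotone growth, and hypothesis (c) --- that $l_0\leq 0$ on $S\cap\text{cl}(U_z)$ with zero set exactly $K_z$ --- is what forces a trajectory that has not yet exited to keep returning to the region near $\mathcal{R}(K_z)$ where the drift estimate holds; your sketch never uses (c) for this. Third, Definition \ref{defn:sept} requires a single time $T(\epsilon,U)$ with $\phi_\epsilon(T(\epsilon,U),U)\cap U=\emptyset$, i.e.\ the entire neighbourhood is off itself simultaneously; showing that each point leaves at \emph{some} time is weaker, since points may re-enter. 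Finally, the uniform averaging bound over the possibly non-ergodic set $\mathcal{R}(K_z)$, which you correctly identify as the technical heart, is deferred rather than carried out, so as written the proposal is a plan rather than a proof.
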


We note that the function $l$ should be viewed as a Lyapunov-type function for the dynamics near the slow exit/entrance point. In \cite{MischaikowMrozekReineck} the authors claim that ``the only dynamics which plays a role in the calculations (for Theorem \ref{thm:csept}) is that of $\phi_0$''. Formally this is not the case since higher-order terms $F_j$ for $j>0$ do enter crucially in (d). But as we shall see in the next section, the idea was intuitively correct. In fact, one should state that only the fast and slow flows of the singular subsystems play a role in the calculations. \\

Using Theorem \ref{thm:csept} we can often identify an isolating neighbourhood for a fast-slow system. Mischaikow, Mrozek and Reineck \cite{MischaikowMrozekReineck} give an analogous construction for index pairs. 

\begin{defn}
A pair of compact sets $(N,L)$ with $L\subset N$ is called a \textit{singular index pair} if $\text{cl}(N-L)$ is a singular isolating neighbourhood and there exists an $\bar{\epsilon}>0$ such that for all $\epsilon\in (0,\bar{\epsilon}]$
\benn
H_*(N,L)=CH_*(\text{Inv}(\text{cl}(N-L),\phi_\epsilon)).
\eenn
\end{defn}

The singular index pair should be characterized by similar conditions as the usual index pair described in Definition \ref{defn:indexpair}. From the exit set requirement we know that $L$ has to contain the \textit{immediate exit set} of $N$
\benn
N^-:=\{z\in \partial N|\phi_0((0,t),z)\not\subset N \text{ for all }t>0\}.
\eenn
Regarding positive invariance, it turns out that give $Y\subset N$ one has to consider the \textit{pushforward set} in $N$ under the flow $Y$ defined by
\benn
\rho(Y,N,\phi_0):=\{z\in N|\exists w\in Y,t\geq 0 \text{ s.t. }\phi_0([0,t],w)\subset N,\phi_0(t,w)=z\}.
\eenn
Basically $\rho(Y,N,\phi_0)$ consists of points in $N$ that can be reached from $Y$ by a positive trajectory in $N$; observe that by construction we must have $Y\subset \rho(Y,N,\phi_0)$. In addition, we also must consider a special version of the unstable manifold of a point lying in $N$
\benn
W^u_N(Y):=\{z\in N|\phi_0((-\I,0),z)\subset N \text{ and }\alpha(z,\phi_0)\subset Y\}.
\eenn
Again we observe that $Y\subset W^u_N(Y)$. Before we can state the theorem about characterizing singular index pairs, one last definition is needed. 

\begin{defn}
A slow entrance point $z$ is called a \textit{strict slow entrance point} if there exists a neighbourhood $V$ of $z$ and an $\bar{\epsilon}>0$ such that if $v\in V\cap N$ and $\epsilon\in(0,\bar{\epsilon}]$ then there exists a time $t_v(\epsilon)$ such that
\benn
\phi_\epsilon([0,t_v(\epsilon)],v)\subset N.
\eenn
The set of strict slow entrance points will be denoted by $S^{++}_\partial$.
\end{defn}

\begin{thm}(\cite{MischaikowMrozekReineck})
\label{thm:sipmis}
Let $N$ be a singular isolating neighbourhood. Assume
\begin{enumerate}
 \item[(A)] $S_\partial^-$ consists of C-slow exit points.
 \item[(B)] $S_\partial \subset S_\partial^{++}\cup S_\partial^-$.
 \item[(C)] $(S_\partial^{++}-S_\partial^-)\cap \text{cl}(N^-)=\emptyset$.
\end{enumerate}
 For each $z\in S_\partial^-$, let $K_z$ be a slow exit guide for $z$. Define
 \benn
 L:=\rho(\text{cl}(N^-),N,\phi_0)\cup W^u_N\left(\bigcup_{z\in S^-_\partial}\mathcal{R}(K_z)\right).
 \eenn
 If $L$ is closed then (N,L) is a singular index pair.
\end{thm}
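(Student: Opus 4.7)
The plan is to verify that for all sufficiently small $\epsilon > 0$, the pair $(N,L)$ is an index pair for $\phi_\epsilon$ in the sense of Definition \ref{defn:indexpair}; once this is done the homology isomorphism $H_*(N,L) = CH_*(\text{Inv}(\text{cl}(N-L),\phi_\epsilon))$ required by the definition of singular index pair follows from the standard Conley-index machinery, making $(N,L)$ a singular index pair. Since $N$ is assumed to be a singular isolating neighbourhood, $\text{Inv}(N,\phi_\epsilon) \subset \text{int}(N)$ for small $\epsilon$, so the task reduces to three items: $L$ is positively invariant in $N$ under $\phi_\epsilon$, $L$ is the exit set for $N$ under $\phi_\epsilon$, and $\text{Inv}(\text{cl}(N-L),\phi_\epsilon) = \text{Inv}(N,\phi_\epsilon)$. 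For this last equality assumption (C) is decisive, since it separates the strict slow entrance stratum $S_\partial^{++}$ from $\text{cl}(N^-)$ and hence from $L$, preventing any invariant dynamics of $\phi_\epsilon$ from being absorbed into $L$.

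Positive invariance of $L$ under $\phi_\epsilon$ is the core of the proof, and I would treat the two summands of $L$ separately. For the pushforward piece $\rho(\text{cl}(N^-),N,\phi_0)$: it is positively invariant under $\phi_0$ in $N$ by construction, so for a $\phi_\epsilon$-orbit that starts there and remains in $N$, I would use uniform convergence $\phi_\epsilon \ra \phi_0$ on compact time intervals together with condition (C) to conclude that the orbit stays in this summand, or else leaves $N$, for $\epsilon$ small. For the unstable-manifold piece $W^u_N(\bigcup_{z\in S_\partial^-}\mathcal{R}(K_z))$: a point $w$ there has a full backward $\phi_0$-orbit in $N$ with $\alpha$-limit in some $K_z$, and the C-slow exit characterisation of Theorem \ref{thm:csept} supplies a function $l(\cdot,\epsilon)$ whose derivative along trajectories is, to leading order, $\epsilon^k G_k$, with $G_k$ having strictly positive averages on $\mathcal{R}(K_z)$. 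Rescaling to the slow time $\tau = \epsilon t$ and integrating turns this into a monotone push of $l$ upward along the perturbed orbit, forcing it away from $\mathcal{R}(K_z)$ along the unstable-manifold structure and keeping it inside $W^u_N(\mathcal{R}(K_z)) \subset L$ until it exits $N$.

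The exit-set property is the dual statement: any forward $\phi_\epsilon$-orbit from $z \in N$ that eventually leaves $N$ must hit $L$ first. I would verify this by exhausting the possible departure routes. Either the orbit exits via a neighbourhood of the immediate fast exit set $\text{cl}(N^-)$, in which case it has already traversed $\rho(\text{cl}(N^-),N,\phi_0)$ by definition of the pushforward; or it exits near $S_\partial^-$, in which case assumption (A) places $\omega(z,\phi_0)$ inside some $K_z$, and the Lyapunov function $l$ together with the averaging estimate steers the perturbed orbit through a neighbourhood of $\mathcal{R}(K_z)$ into $W^u_N(\mathcal{R}(K_z))$ before finally leaving $N$. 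Assumption (B) rules out any other type of boundary behaviour for points of $S_\partial$, so the two cases are exhaustive.

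The hard part will be the averaging step, which appears in both the invariance and exit-set arguments: one must translate $\text{Avg}(G_k,\mathcal{R}(K_z)) \subset (0,\I)$ into a uniform-in-$\epsilon$ integrated estimate valid over the long time scale $O(\epsilon^{-k})$ needed for an orbit to cross $N$ under the perturbation, while controlling the lower-order correction terms $\epsilon^i F_i$ for $i < k$ and coping with the fact that $\mathcal{R}(K_z)$ is only chain recurrent, not an attractor, so orbits may shadow rather than enter it. One also has to patch together finitely many slow exit guides $K_z$ covering the compact set $S_\partial^-$ so that the resulting estimates are uniform. Once this integrated Lyapunov bound is in place, the remaining arguments reduce to continuity of flows and compactness manipulations inside $N$, together with the hypothesis that the constructed $L$ is closed, which is what allows us to pass from a pointwise characterisation to the global topological index-pair property.
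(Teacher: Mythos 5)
The paper does not prove Theorem \ref{thm:sipmis} at all; it is imported verbatim from \cite{MischaikowMrozekReineck}, so there is no internal proof to compare against. Judged on its own merits, your proposal has a genuine structural flaw at the very first step. You reduce the problem to showing that $(N,L)$ is an index pair for $\phi_\epsilon$ in the sense of Definition \ref{defn:indexpair}, and then invoke ``standard Conley-index machinery'' for the homology identity. But $(N,L)$ is in general \emph{not} an index pair for $\phi_\epsilon$: the set $L$ is assembled entirely from $\phi_0$-objects ($\rho(\cdot,N,\phi_0)$ and $W^u_N(\cdot)$ taken with respect to the fast flow), and positive invariance of $L$ in $N$ under $\phi_\epsilon$ fails generically, since the slow drift carries $\phi_\epsilon$-orbits off these $\phi_0$-invariant sets into $\text{int}(N)\setminus L$ while remaining in $N$. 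The paper's own van der Pol example makes this concrete: Proposition \ref{prop:VdP} asserts that $(N,\partial N)$ is a singular index pair, yet $\partial N$ is plainly not positively invariant in $N$ under $\phi_\epsilon$ (boundary points where the flow enters the annulus immediately leave $\partial N$). The whole point of the notion of a singular index pair is that it is strictly weaker than being an index pair for $\phi_\epsilon$: one only demands that $H_*(N,L)$ computes the Conley index. The actual argument in \cite{MischaikowMrozekReineck} constructs a genuine index pair for $\phi_\epsilon$ (built from the perturbed flow) and then shows, via deformation and excision arguments that use the Lyapunov functions $l$ and the averaging hypothesis, that its relative homology agrees with $H_*(N,L)$. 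Your plan omits this comparison entirely, and the statement you propose to prove instead is false.

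Beyond that, the analytic core of the theorem --- converting $\text{Avg}(G_j,\mathcal{R}(K_z))\subset(0,\infty)$ into uniform-in-$\epsilon$ estimates over times of order $\epsilon^{-j}$, handling the fact that $\mathcal{R}(K_z)$ is only chain recurrent, and patching finitely many exit guides --- is explicitly flagged by you as ``the hard part'' but not carried out. As written, the proposal is an outline of a strategy whose first reduction is unsound, not a proof.
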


Observe that for Theorem \ref{thm:csept} and Theorem \ref{thm:sipmis} it is crucial to determine which points are slow exit/entrance points. The conditions (a)-(d) given in Theorem \ref{thm:csept} are complicated. The goal of this paper is to simplify these conditions. 
 
\section{Equilibrium Exit Points}
\label{sec:eepts}

Let $N$ be a compact set and let $z_0=(x_0,y_0)\in \text{Inv}(N,\phi_0)=:S$ where $\phi_0$ denotes the flow of the fast subsystem. Let $C$ denote the critical manifold. We make the following assumptions:

\begin{enumerate}
 \item [(A1)] $z_0\in C\cap \partial N$.
 \item [(A2)] $C$ is a normally hyperbolic manifold at $z_0$ and locally given as a graph $x=h(y)$.
 \item [(A3)] $\partial N$ is smooth and parallel to the fast fibers near $z_0$. 
 \item [(A4)] The slow flow $\dot{y}=g(h(y),y)$ is transverse to $\partial N$ near $z_0$. Let $\vec{n}$ denote the outward unit normal to $N$ at $z_0$; there are two cases: 
  \begin{enumerate}
   \item [(A4.1)] $\vec{n}\cdot (0,g(h(y_0),y_0))>0$, slow flow directed outward near $z_0$.
   \item [(A4.2)] $\vec{n}\cdot (0,g(h(y_0),y_0))<0$, slow flow directed inward near $z_0$. 
  \end{enumerate}
\end{enumerate} 

\begin{thm}
\label{thm:eepts}
Under conditions (A1)-(A4) the point $z_0$ is a slow exit/entrance point. If (A4.1) holds we have a slow exit point and for (A4.2) we get a slow entrance point.
\end{thm}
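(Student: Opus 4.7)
The plan is to apply Fenichel Normal Form to pass to local coordinates in which $\partial N$ becomes a cylinder over the slow variable, and then to read off the transversality hypothesis (A4) as a strictly monotone drift of a boundary-defining function along trajectories of $\phi_\epsilon$. By (A2) and Theorem~\ref{thm:FNform}, pick local coordinates $(a,b,v)\in\R^{m_u}\times\R^{m_s}\times\R^n$ near $z_0=(0,0,v_0)$ in which $\phi_\epsilon$ takes the form \eqref{eq:FNform1} and the slow manifold is $M_\epsilon = \{a=b=0\}$. Since every fast fiber is $\{v=\text{const}\}$ in these coordinates, hypothesis (A3) forces $\partial N$ locally to be a cylinder $\{v\in\Sigma\}$ over a smooth hypersurface $\Sigma\ni v_0$. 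Write $\Sigma=\{\Phi=0\}$ with $\Phi<0$ on the interior of $N$ and $\nabla\Phi\ne 0$ on $\Sigma$; condition (A4.1) then reads $\mu := \nabla\Phi(v_0)\cdot m(v_0,0) > 0$.

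Differentiating $\Phi(v(t))$ along a trajectory and using the $v$-equation of \eqref{eq:FNform1},
\[
\frac{d}{dt}\Phi(v(t)) = \epsilon\bigl[\nabla\Phi(v)\cdot m(v,\epsilon) + \nabla\Phi(v)\cdot H(a,b,v,\epsilon)\,ab\bigr].
\]
Choose a product neighborhood $U=\{|a|<\delta,\ |b|<\delta,\ |v-v_0|<\delta\}$ with $\delta$ small enough that the first term exceeds $3\mu/4$ while the bilinear remainder is dominated by $\mu/4$ throughout $U$, and take $\bar\epsilon$ small enough to absorb the $\epsilon$-correction in $m$; then $\frac{d}{dt}\Phi(v(t))\geq \epsilon\mu/2$ for trajectories inside $U$. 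Set $T(\epsilon):=T_0/\epsilon$ with $T_0$ chosen so that $T_0\mu/2$ exceeds $\sup_{U_v}\Phi - \inf_{U_v}\Phi$. If a trajectory starting in $U$ remained in $U$ throughout $[0,T(\epsilon)]$, integrating the above would force a growth of $\Phi$ exceeding its total range on $U_v$, which is impossible. For trajectories that leave $U$ before $T(\epsilon)$ one uses the FNF dynamical role of each face: $\{|b|=\delta\}$ is inward (stable contraction by $\Gamma$), $\{|a|=\delta\}$ is outward and $|a|$ grows monotonically in a norm adapted to the positive spectrum of $\Lambda$, and the $v$-faces are controlled by the continuing monotonicity of $\Phi$ on a slightly larger neighborhood. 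Thus no trajectory re-enters $U$ by time $T(\epsilon)$, giving $\phi_\epsilon(T(\epsilon),U)\cap U=\emptyset$ and verifying Definition~\ref{defn:sept}. The entrance case (A4.2) follows by applying the same argument to the time-reversed flow, which preserves (A1)--(A3) and converts inward slow transversality into outward transversality.

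The hard part will be the non-return step: ruling out that a trajectory which has exited $U$ through the unstable face $\{|a|=\delta\}$ sneaks back into $U$ before $T(\epsilon)$. This is what motivates the product choice of $U$ aligned with the FNF coordinates, together with the use of an adapted norm on the unstable block $\Lambda$ to make $|a|$ Lyapunov-monotone throughout an enlarged FNF chart surrounding $U$. With these choices in place, non-return reduces to monotonicity statements on a single coordinate face at a time and is routine, so that the entire argument hinges on the drift estimate $\frac{d}{dt}\Phi(v(t))\geq \epsilon\mu/2$ coming from FNF together with (A4).
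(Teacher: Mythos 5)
Your approach is genuinely different from the paper's, and the difference is exactly where the gap lives. The paper does not verify Definition~\ref{defn:sept} directly: it verifies hypotheses (a)--(d) of Conley's Theorem~\ref{thm:csept}, taking $K_{z_0}=\{0\}$ and the explicit Lyapunov-type function $l_0(x,y)=y_1-\sum_{j=1}^m x_j^2$ in Fenichel coordinates, and computing $G_0(0)=0$ and $G_1(0)=1>0$. All of the delicate dynamical content --- producing one common time $T(\epsilon,U)$ at which \emph{every} point of $U$ is simultaneously outside $U$, and ruling out returns --- is thereby delegated to Conley's theorem, which is built (via the chain recurrent set, the sign condition $l_0|_{S\cap\mathrm{cl}(U_z)}\leq 0$, and the coupling term $-\sum_j x_j^2$) precisely to deliver that conclusion. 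Your first paragraph, the cylinder structure of $\partial N$ forced by (A3) and the drift estimate $\frac{d}{dt}\Phi(v(t))\geq\epsilon\mu/2$, is correct and is essentially the same computation as the paper's $G_1(0)>0$.

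The gap is the non-return step, which you flag as ``the hard part'' and then declare routine; it is not. Conditions (A1)--(A4) are purely local, and the Fenichel Normal Form chart is valid only in a small neighbourhood of the critical manifold. A point of $U$ with $a\neq 0$ escapes through the unstable face $\{|a|=\delta\}$ in fast time, and from the instant it leaves the chart your adapted-norm monotonicity of $|a|$ says nothing: the global flow may in principle carry it back into $U$, and nothing in (A1)--(A4) forbids its sitting inside $U$ at the specific time $T(\epsilon)=T_0/\epsilon$ you fixed. On the other hand, points exponentially close to the stable manifold $\{a=0\}$ need a time of order $|\log|a_0||$ plus $O(1/\epsilon)$ of slow drift before they leave, so you cannot shorten $T$ to outrun potential returns of the early escapers. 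Synchronizing these two populations is exactly the content of Conley's slow-exit lemma and requires either that machinery or some genuinely nonlocal input; ``monotonicity on a single coordinate face at a time'' inside one chart cannot produce it. The repair is to do what the paper does: feed your local normal-form data into Theorem~\ref{thm:csept} (choose $l_0=y_1-\sum_j x_j^2$, check (a)--(d)) rather than attacking Definition~\ref{defn:sept} head on.
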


\textit{Remark:} Theorem \ref{thm:eepts} only requires knowledge about the fast and slow subsystems for $\epsilon=0$. This justifies more clearly than Theorem \ref{thm:csept} that a special Conley index theory for fast-slow systems is possible.

\begin{proof}
Suppose without loss of generality that (A4.1) holds so that we are trying to show that $z_0\in C$ is a slow exit point. We also work in a sufficiently small neighbourhood of $z_0$ for the rest of the proof. The goal is to verify the conditions (a)-(d) of Theorem \ref{thm:csept}. Using (A2) we apply Fenichel's Normal Form Theorem \ref{thm:FNform} to re-write the fast-slow system near $0$ as
\be
\label{eq:FNformPf1}
\begin{array}{lcl}
x'&=& \Omega(x,y,\epsilon)x,\\
y'&=& \epsilon(m(y,\epsilon)+H(x,y,\epsilon)x),\\
\end{array}
\ee  
where $H$ is bilinear as described in \eqref{eq:Hbilinear} and we have re-written the Fenichel coordinates as $x=(a,b)\in\R^m$ and $v=y\in\R^n$ with $\Omega=(\Lambda,\Gamma)$. Due to (A3), we obtain that $\partial N$ is locally given by $\{y=0\}$ and also locally we have $N=\{y_i\leq 0\text{ for all $i=1,\ldots,n$}\}$; see Figure \ref{fig:fig4}.\\ 

\begin{figure}[htbp]
\psfrag{N}{$N$}
\psfrag{partialN}{$\partial N$}
\psfrag{x1}{$x_1$}
\psfrag{x2}{$x_2$}
\psfrag{y}{$y$}
	\centering
		\includegraphics[width=0.65\textwidth]{./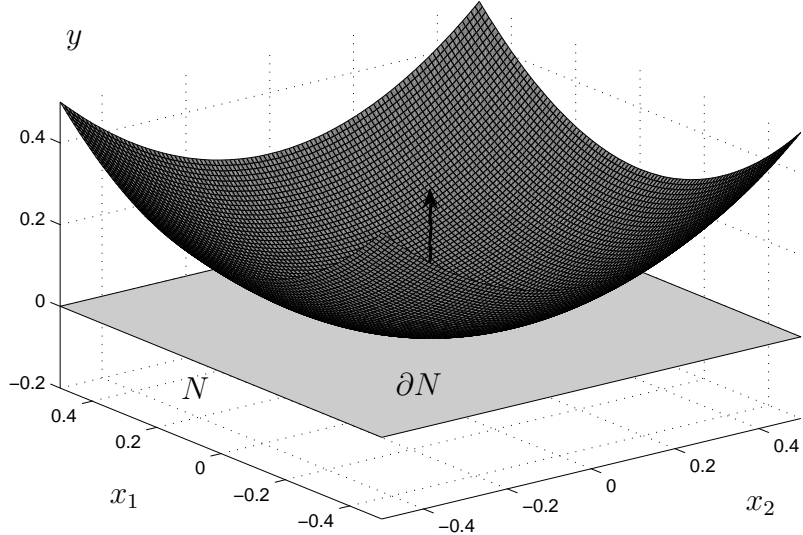}
	\caption{\label{fig:fig4}Illustration of the situation near a slow exit point at the origin in $\R^3$. The compact set $N$ is locally given by $\{y\leq 0\}$. The outer normal vector $e_1$ to $\partial N$ is also shown. The slow flow will point along this normal vector. The parabolic surface $L_0$ is the zero set $l(x,y)=0=l_0(x,y)$ .}	
\end{figure}

Note that (A4.1) implies that near $0$ we can rectify the slow flow so that \eqref{eq:FNformPf1} becomes:
\be
\label{eq:FNformPf2}
\begin{array}{lcl}
x'&=& \Omega(x,y,\epsilon)x,\\
y'&=& \epsilon(e_1+H(x,y,\epsilon)x),\\
\end{array}
\ee 
where $e_1=(1,0,\ldots,0)\in\R^n$. As a slow exit guide set $K=K_0:=\{0\}$ and observe that since $0\in C$ it is an equilibrium point for the fast subsystem. Therefore $\omega(0,\phi_0)=\{0\}$ and so $\omega(0,\phi)=K$ which verifies (a). Define the function $l$ by
\benn
l(x,y)=y_1-\sum_{j=1}^m (x_j)^2=l_0(x,y).
\eenn 
Notice that $l=l_0$ and so we find that (see also Figure \ref{fig:fig4}) 
\benn
L_0=\{(x,y)|l_0(x,y)=0\}=\left\{y_1=\sum_{j=1}^m (x_j)^2\right\}.
\eenn
Let $U$ be a sufficiently small neighbourhood around $0$ then $U\cap K=\{0\}$. We also have locally $S\cap \text{cl}(U)=N\cap C$ and therefore
\benn
S\cap \text{cl}(U)\cap L_0=\{0\}
\eenn
since $0= \sum_{j=1}^m (x_j)^2$ holds if and only if $x_j=0$ for all $j=1,2,\ldots, m$. Obviously $l_0|_{S\cap \text{cl}(U)}\leq 0$ and so (b)-(c) hold. For the last step, observe that $\mathcal{R}(0)=\{0\}$ and hence we have to verify that condition (d) holds at the origin i.e. 
\benn
G_j(z)=\nabla_z l_0(z)\cdot F_j(z)+\nabla_z l_1(z)\cdot F_{j-1}(z)+\ldots+\nabla_z l_j(z)\cdot F_0(z)
\eenn 
satisfies that for some $k$, $G_k(0)=0$ if $k<j$ and $G_j(0)>0$. We compute the gradient of $l_0$ 
\benn
\nabla_z l_0(x,y)=(-2x_1,-2x_2,\ldots,-2x_m,1,0,\ldots,0).
\eenn 
Since $\phi_0$ describes the fast flow, the first term $F_0$ in \eqref{eq:sipsumform} for the normal form \eqref{eq:FNformPf2} is given by
\benn
F_0(x,y)=(\Omega(x,y,0)x,0)^T.
\eenn 
This gives that $\nabla_z l_0(0)\cdot F_0(0)=0\cdot \Omega(x,y,0)x + e_1\cdot 0=0$. Hence $G_0(0)$ is identically zero. Next, we show that $G_1(0)$ is positive. We have
\benn
G_1(z)=\nabla_z l_0(z) \cdot F_1(z)=(-2x_1,-2x_2,\ldots,-2x_m,1,0,\ldots,0)\cdot(0, e_1+H(x,y,\epsilon)x)^T.
\eenn
Since $H(0,0,\epsilon)0=0$ we immediately get $G_1(0)=e_1\cdot (e_1)^T=1>0$ verifying (d). Therefore the original point $z_0$ is a slow exit point.
\end{proof}

The only condition that does not seem not quite natural for Theorem \ref{thm:eepts} is (A3). To illustrate that it is necessary consider the following example.

\begin{ex}
\label{ex:repelling}
Consider a fast-slow system with $(x,y)\in\R^2$ given by
\beann
x'&=&x,\\
y'&=&\epsilon.
\eeann
The solution is given by $(x(t),y(t))=(x(0)e^t,y(0)+\epsilon t)$. Fix some $m>0$ and let $N=\{(x,y)\in\R^2|y\leq mx\}$ locally near $0$ i.e. we truncate $N$ outside a suitable neighbourhood to make it compact. Now the origin is not a slow exit point although (A1)-(A2) and (A4.1) hold. Indeed, pick a neighbourhood $U$ of $0$ then there is $(x(0),y(0))\in U$ such that $x(0),y(0)>0$. For $\epsilon>0$ sufficiently small we can easily assure that $y(0)+\epsilon t<mx(0)e^t$ for all $t>0$ such that the trajectory starting at $(x(0),y(0))$ stays in $N$. 
\end{ex} 

Example \ref{ex:repelling} also indicates that the problem should not occur for attracting critical/slow manifolds.

\begin{prop}
\label{prop:simple}
Suppose (A1),(A2) and (A4) hold. Furthermore assume that $\partial N$ is locally linear and has an angle of order $O(1)$ to $C$ at $z_0$ and that $C$ is attracting at $z_0$. Then $z_0$ is a slow exit/entrance point. If (A4.1) holds we have a slow exit point and for (A4.2) we get a slow entrance point.
\end{prop}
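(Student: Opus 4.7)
The plan is to adapt the proof of Theorem~\ref{thm:eepts} by verifying the four conditions of Theorem~\ref{thm:csept}; without loss of generality assume (A4.1) holds and $z_0 = 0$ (the entrance case follows by time reversal). Two ingredients compensate for the missing hypothesis (A3): the attracting character of $C$ controls the forward $\phi_0$-dynamics, and the $O(1)$-angle between $\partial N$ and $C$ provides a non-degenerate slow-exit guide.

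Since $C$ is attracting at $0$, Fenichel Normal Form (Theorem~\ref{thm:FNform}) applies with $m_u = 0$, and in suitable coordinates the system reduces to
\benn
x' = \Omega(x,y,\epsilon)x, \qquad y' = \epsilon\,m(y,\epsilon),
\eenn
with $\Omega$ having eigenvalues of strictly negative real part and $C = \{x = 0\}$. Write $\partial N$ locally as $\{\psi(x,y) = 0\}$ with $N = \{\psi \le 0\}$, where $\psi$ is the image of the affine function $\phi_N(z) = \vec{n}\cdot z$ under the Fenichel coordinate change; the $O(1)$-angle hypothesis translates to the quantitative nondegeneracy $\nabla_y\psi(0,0)\ne 0$, and in these coordinates (A4.1) reads $\nabla_y\psi(0,0)\cdot m(0,0) > 0$. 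Take
\benn
K := C \cap \partial N \cap \overline{V} = \{(0,y) : \psi(0,y) = 0,\ y\in\overline{V}\}
\eenn
for a small closed ball $\overline{V}$ around $0$: $K$ is compact, $\phi_0$-invariant (consisting of fast equilibria), $\mathcal{R}(K) = K$, and contains $\omega(0,\phi_0) = \{0\}$. Pick $U_z$ a small open neighborhood of $K$ inside $V$.

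As Lyapunov-type function take
\benn
l(z,\epsilon) = l_0(z) := \psi(0,y) - \alpha\,\|x\|^2, \qquad \alpha > 0,
\eenn
with $l_j \equiv 0$ for $j \ge 1$. For conditions (a)--(c): any $z = (x,y) \in S \cap \overline{U}_z$ has forward $\phi_0$-orbit converging to $(0,y)$ by the attracting hypothesis, so $z \in S$ forces $(0,y) \in N$, i.e.\ $\psi(0,y) \le 0$; hence $l_0 \le 0$ on $S \cap \overline{U}_z$, with $l_0(z) = 0$ iff $\psi(0,y) = \alpha\|x\|^2 = 0$ iff $z \in K$. Thus $S \cap L_0 \cap \overline{U}_z = K \cap \overline{U}_z$. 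For (d), $\nabla_x l_0 = -2\alpha x$ vanishes on $C$ and $F_0 = (\Omega x,0)$, so $G_0 \equiv 0$ on $K$. With $l_1 = 0$ and $F_1 = (\partial_\epsilon(\Omega x)|_{\epsilon=0},\,m(y,0))$ a short calculation on $K$ gives
\benn
G_1(0,y) = \nabla_y\psi(0,y)\cdot m(y,0),
\eenn
which at $y = 0$ is strictly positive by (A4.1); continuity together with the $O(1)$-angle bound keep it positive on all of $K$ after shrinking $V$. Since $K$ consists of equilibria, averages reduce to pointwise values and strict positivity on $\mathcal{R}(K) = K$ follows, so (d) holds with $k = 0$, $j = 1$.

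The main obstacle I anticipate is the step $l_0 \le 0$ on $S \cap \overline{U}_z$ and the set equality $S \cap L_0 \cap \overline{U}_z = K \cap \overline{U}_z$ in condition (c): both rest on the attracting hypothesis via forward convergence to $(0,y)$. Example~\ref{ex:repelling} shows that without attracting-ness, off-$C$ orbits can persist in $S$ even when their asymptote lies outside $N$, allowing $L_0$ to acquire spurious zeros and destroying the set equality. The $O(1)$-angle hypothesis is equally essential: it ensures that $K$ is a genuine $(n-1)$-dimensional slow-exit guide (rather than degenerating) and that the positivity of $G_1$ at $z_0$ propagates robustly to a neighborhood of $0$ in $K$.
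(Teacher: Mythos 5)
Your route is genuinely different from the paper's: the paper proves Proposition~\ref{prop:simple} \emph{without} invoking Conley's Theorem~\ref{thm:csept}, by a direct argument for $\epsilon>0$ --- Fenichel's Theorem supplies an attracting slow manifold $C_\epsilon$ that is $O(\epsilon)$-close to $C$ and, by the $O(1)$-angle hypothesis, not tangent to $\partial N$; every trajectory starting in a neighbourhood $U$ of $z_0$ is attracted exponentially to $C_\epsilon$, is then swept across $\partial N$ by (F3) and (A4.1), and compactness of $\text{cl}(U)$ gives the uniform time $T(\epsilon,U)$ required by Definition~\ref{defn:sept}. Your plan of exhibiting $z_0$ as a C-slow exit point would in principle give something stronger (Theorem~\ref{thm:sipmis}(A) asks for C-slow exit points), but as written the verification has a gap.

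The gap is in condition (d), and there is a decisive test for it: every condition you check is also satisfied, verbatim, in Example~\ref{ex:repelling}, where the paper proves the origin is \emph{not} a slow exit point. There $C=\{x=0\}$, $\partial N=\{y=mx\}$, so your $K=C\cap\partial N=\{0\}$, $\psi(0,y)=y$, $l_0=y-\alpha x^2$; forward orbits with $x\neq 0$ blow up and leave the compact set $N$, so locally $S=\{(0,y):y\le 0\}$, whence $l_0|_S\le 0$, $S\cap L_0=\{0\}=K$, $G_0=0$ on $\mathcal{R}(K)$ and $G_1(0)=1>0$. Hence checking $G_0=0$ only on $K$ (together with (a)--(c)) cannot suffice. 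The point you are missing is that $G_0(x,y)=-2\alpha\,x\cdot\Omega(x,y,0)x$ does \emph{not} vanish off $C$, and the mechanism behind Theorem~\ref{thm:csept} needs the lower-order terms to be controlled on all of $\text{cl}(U_z)$: otherwise, exactly as in Example~\ref{ex:repelling}, a negative fast contribution to $\frac{d}{dt}l$ overwhelms $\epsilon G_1>0$ at distance $\gg\sqrt{\epsilon}$ from $C$ and the Lyapunov function no longer forces exit. Consequently your own diagnosis of where attractingness enters (the inequality $l_0\le 0$ and the set equality in (c)) is misplaced --- both hold in the repelling example as well. Attractingness must instead be used to control the off-$C$ points of $U_z$. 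One way to repair your route: solve a Lyapunov equation for the stable matrix $\Omega(0,0,0)$ to get $P>0$ with $P\Omega+\Omega^TP<0$ and take $l_0=\psi(0,y)-\alpha\,x^TPx$; then $G_0=-\alpha x^T(P\Omega+\Omega^TP)x\ge 0$ on all of $U_z$, so $\frac{d}{dt}l\ge \tfrac{\epsilon}{2}$ near $K$ and orbits are forced out of $N$ --- this is where the negativity of the eigenvalues finally does real work, and it is the Lyapunov-function shadow of the paper's direct Fenichel argument.
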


\begin{proof}
In this case the proof is much simpler and we do not need Conley's Theorem \ref{thm:csept}. Again we can restrict without loss of generality to the case (A4.1). We will work under the assumption for the rest of this proof that $\epsilon>0$ has been chosen so that Fenichel Theory applies. Applying Fenichel's Normal Form Theorem as in Theorem \ref{thm:eepts} gives
\be
\label{eq:FNformPf1a}
\begin{array}{lcl}
x'&=& \Omega(x,y,\epsilon)x,\\
y'&=& \epsilon(m(y,\epsilon)+H(x,y,\epsilon)x),\\
\end{array}
\ee  
where now $\Omega(0,0,0)$ has $m$ negative eigenvalues. Let $U$ be a small neighbourhood around the origin. By Fenichel's Theorem \ref{thm:fenichel1} (F1) there exists a slow manifold $C_\epsilon$. Let $\gamma$ be a trajectory with an initial condition in $U$. Since $\Omega(0,0,0)$ has $m$ negative eigenvalues Fenichel's Theorem (F5) shows that $C_\epsilon$ is attracting. Therefore $\gamma$ gets attracted exponentially to $C_\epsilon$ or lies in $C_\epsilon$. Observe that $\partial N$ is not tangent to $C_\epsilon$ as it has an $O(1)$ angle to $C_0$. By (A4.1) and Fenichel's Theorem (F3) we find that $\gamma$ must leave $N$ after a time $T_\gamma(\epsilon)$. Since $U$ is bounded we can take the maximum of all times over $\text{cl}(U)$ 
\benn
T(\epsilon,U):=\max_{\gamma(0)\in \text{cl}(U)}T_\gamma(\epsilon).
\eenn
This verifies Definition \ref{defn:sept}.    
\end{proof}

Unfortunately Proposition \ref{prop:simple} is rarely helpful. One reason is that often it is convenient to make the critical manifold repelling near slow exit points; see examples in Section \ref{sec:appl}. The main reason is that in many important cases critical manifolds of saddle-type appear \cite{GuckenheimerKuehn2}. In fact, one of the most well-known examples, the 3D FitzHugh-Nagumo equation, has two fast variables and one slow variable with a critical/slow manifold of saddle type \cite{GuckenheimerKuehn1,GuckenheimerKuehn3}.  

\section{Periodic Orbit Exit Points}

In this section we shall not aim for the most general results but show some characterizations of slow exit points in the case of periodic orbits. We restrict to the case of fast-slow systems in $\R^3$ with two fast variables i.e. $(x,y)\in\R^2\times \R$ and
\bea
\label{eq:basic3}
x_1'&=&f_1(x,y)\nonumber,\\
x_2'&=&f_2(x,y),\\
y'&=&\epsilon g(x,y).\nonumber
\eea
Let $\gamma_y(t)\in \R^2$ denote a periodic orbit for the fast subsystem with period $T_y$ so that
\benn
\gamma_y(0)=\gamma_y(T_y),\quad \gamma'_y(t)=f(\gamma_y(t),y).
\eenn
Let $N:=[-K,K]^2\times [-K,0]\subset \R^3$ for $K>0$ so that the following assumptions hold (see Figure \ref{fig:fig5}):

\begin{enumerate}
 \item [(B1)] There exists family of hyperbolic periodic orbits $\{\gamma_y\}$ for $y\in[-\delta_0,\delta_0]$ for some $\delta_0>0$ in the fast subsystem.
 \item [(B2)] $\{\gamma_y\}_{y\in[-\delta_0,0]}\subset N$ and $\gamma_0\subset\text{int}([-K,K]^2\times\{0\})$.
\end{enumerate}

\begin{figure}[htbp]
\psfrag{N}{$N$}
\psfrag{pN}{$\partial N$}
\psfrag{gamma}{$\gamma_0$}
\psfrag{l0}{$L_0$}
\psfrag{x2}{$x_2$}
\psfrag{x1}{$x_1$}
\psfrag{y}{$y$}
	\centering
		\includegraphics[width=0.65\textwidth]{./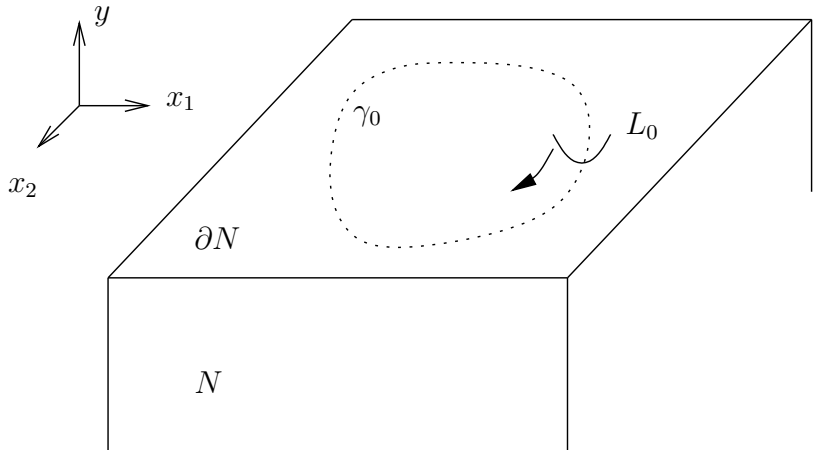}
	\caption{\label{fig:fig5}Sketch of the situation near periodic orbit $\gamma_0\subset \partial N$ in the fast subsystem. The parabolic surface $L_0$ given by the zero set $l_0(x,y)=0$ is defined by rotating the given parabola along $\gamma_0$.}	
\end{figure}

\begin{prop}
\label{prop:porbit}
Suppose (B1)-(B2) hold and assume that
\be
\label{eq:avg_cond}
\frac{1}{T_0} \int_0^{T_0} g(\gamma_0(s),0)ds>0
\ee
where $g$ is as given in \eqref{eq:basic3}. Then all points in $\gamma_0$ are slow exit points for $N$.
\end{prop}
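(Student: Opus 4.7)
The plan is to verify conditions (a)–(d) of Conley's Theorem~\ref{thm:csept} for every point $z_0\in\gamma_0$, using the whole hyperbolic periodic orbit $K_{z_0}:=\gamma_0$ as the slow exit guide. The construction parallels the proof of Theorem~\ref{thm:eepts} with the equilibrium $\{0\}$ replaced by $\gamma_0$; correspondingly, the transversality condition (A4.1) of the equilibrium case is replaced by the averaged positivity assumption~\eqref{eq:avg_cond}. No Fenichel Normal Form is needed here because $\partial N=\{y=0\}$ already contains $\gamma_0$ and is transverse to the slow direction.

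For (a): since $z_0\in\gamma_0$ and $\gamma_0$ is a periodic orbit of $\phi_0$, we have $\omega(z_0,\phi_0)=\gamma_0=K_{z_0}$; as $\gamma_0$ is hyperbolic, its chain recurrent set satisfies $\mathcal{R}(K_{z_0})=\gamma_0$. For (b)–(c) I would take the Lyapunov-type function
\benn
l(x,y,\epsilon):=l_0(x,y):=y-\rho(x)^2,\qquad l_k\equiv 0 \text{ for } k\ge 1,
\eenn
where $\rho(x)$ is the Euclidean distance in $\R^2$ from $x$ to the planar curve $\gamma_0$. Because $\gamma_0$ is a smooth embedded closed curve, $\rho^2$ is smooth in a tubular neighbourhood $U_{z_0}$ of $\gamma_0$. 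The zero set $L_0=\{y=\rho(x)^2\}$ lies in $\{y\ge 0\}$ while $N\subset\{y\le 0\}$, hence $L_0\cap N\subseteq\{y=0,\,\rho=0\}=\gamma_0$; conversely $\gamma_0\subset S\cap L_0$, giving $S\cap L_0\cap\text{cl}(U_{z_0})=\gamma_0=K_{z_0}\cap\text{cl}(U_{z_0})$, and $l_0\le 0$ holds on all of $N$ and hence on $S\cap\text{cl}(U_{z_0})$. This is precisely the ``parabolic surface'' pictured in Figure~\ref{fig:fig5}.

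For (d), write the right-hand side of \eqref{eq:basic3} as $F_0+\epsilon F_1$ with $F_0=(f_1,f_2,0)$ and $F_1=(0,0,g)$. Using $\nabla l_0=(-2\rho\,\partial_{x_1}\rho,\,-2\rho\,\partial_{x_2}\rho,\,1)$ one computes
\benn
G_0=\nabla l_0\cdot F_0=-2\rho(x)\,\nabla\rho(x)\cdot f(x,y),\qquad G_1=\nabla l_0\cdot F_1=g(x,y).
\eenn
Because $\rho\equiv 0$ on $\gamma_0$, $G_0$ vanishes identically on $\mathcal{R}(K_{z_0})=\gamma_0$. The time average of $G_1$ along the periodic orbit $\gamma_0$ is the single number $T_0^{-1}\int_0^{T_0} g(\gamma_0(s),0)\,ds$, which is strictly positive by hypothesis~\eqref{eq:avg_cond}. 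Hence $G_1$ has strictly positive averages on $\mathcal{R}(K_{z_0})$ and (d) holds with $j=1$; Theorem~\ref{thm:csept} then gives the conclusion.

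The main obstacle is more a matter of careful bookkeeping than a genuine technical difficulty: the tubular neighbourhood $U_{z_0}$ must be chosen small enough (and contained in the slab $|y|<\delta_0$ from (B1)) so that $\rho^2$ is smooth on it and so that $L_0$ does not meet $S$ outside $\gamma_0$ within $U_{z_0}$, both of which follow from (B1)–(B2). A pleasant by-product of the argument is the averaging interpretation advertised in the paper: the only way the slow equation enters is through the time average of $g$ along the unperturbed periodic orbit, playing exactly the role that the outward normal component of the slow flow plays at an equilibrium exit point in (A4.1).
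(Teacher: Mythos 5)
Your proposal is correct and follows essentially the same route as the paper: the slow exit guide is $K=\gamma_0=\mathcal{R}(K)$, the Lyapunov-type function is $l_0(x,y)=y-\operatorname{dist}(x,\gamma_0)^2$ (the paper writes this as $y-\sum_j(x_j-\pi_0(z)_j)^2$ using the orthogonal projection $\pi_0$ onto $\gamma_0$, which is the same function), and condition (d) reduces to $G_0=0$ on $\gamma_0$ and $G_1=g$ having the strictly positive average \eqref{eq:avg_cond}. If anything, your gradient computation is slightly more careful than the paper's, which asserts $\nabla_z l_0=(0,0,1)^T$ without noting that this holds only on $\gamma_0$ where $\rho$ vanishes.
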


\begin{proof}
The proof is very similar to the argument for Theorem \ref{thm:eepts}. Let $z_e=(x_e,0)\in \gamma_0$ be any point in the periodic orbit contained in $\partial N$. Observe that $\omega(x_e,\phi_0)=\gamma_0$ and let $K=\gamma_0=\mathcal{R}(K)$. Let $U$ be an annular neighbourhood of $K$ contained in $N$, for example we can set
\benn
U=\left\{(x,y)\in\R^3:\min_{z_\gamma}\|z_\gamma-(x,y)^T\|_2<\delta_1\text{ for $z_\gamma\in\gamma_y$ with $y\in[-\delta_2,\delta_2]$}\right\}
\eenn
for $\delta_1,\delta_2>0$ sufficiently small. let $\pi_0:\R^3\ra \R^3$ denote the orthogonal projection onto $\gamma_0$. Now define
\benn
l(z)=l(x,y)=l_0(x,y):=y-\sum_{j=1}^2(x_j-\pi_0(z)_j)^2
\eenn 
where the subscript $j$ indicates the $x_j$-coordinate of a point. In the notation of Theorem \ref{thm:csept} we easily check that (c) holds and we also observe that for (d) we have $G_0(z)\equiv 0$ since $\nabla_z l_0=(0,0,1)^T$ and $F_0=(f_1,f_2,0)$. We also find that $G_1=(0,0,g(z))\cdot (0,0,1)^T$ and on $K$ we indeed have
\benn
\frac{1}{T_0} \int_0^{T_0} g(\gamma_0(s),0)ds>0
\eenn
which verifies (d) and shows that $z_e$ is a slow exit point. Noting that $z_e$ was arbitrary on $\gamma_0$ finishes the proof.
\end{proof}

Note that Proposition \ref{prop:porbit} has the rather obvious interpretation that a point is a slow exit point for a periodic orbit of the fast subsystem if it lies on the boundary of the compact set $N$ and the average slow drift moves it outside of $N$. It is more interesting to re-interpret the condition \eqref{eq:avg_cond}.\\

We want to deal with families of periodic orbits in the fast subsystem. For normally hyperbolic parts of the critical manifold we know that there is a slow flow on a slow manifold that is $O(\epsilon)$-close. Next, we recall an analog of the result for periodic orbits of the fast subsystem. The idea is to find a flow that approximates the flow on the family of periodic orbits. Consider the fast system
\be
\label{eq:avg_fss}
\frac{dx}{dt}=x'=f(x,y)
\ee
such that \eqref{eq:avg_fss} has a continuous family of periodic orbits $\gamma_y(t)$ for each value of $y$ in some neighbourhood $D_0$ of $y=y_0$ with period $T_y$ that is uniformly bounded so that there are constants $T^a,T^b>0$ such that $T^a\leq T_y\leq T^b$. For simplicity we shall also assume that each orbit $\gamma_y(t)$ is asymptotically stable with respect to the fast variables. It seems plausible that the full fast-slow system should have solutions $(x(\tau),y(\tau))$ such that the fast motion is approximated by the family of rapid oscillating periodic orbits:
\benn
x(\tau)\approx \gamma_y\left(\frac{\tau}{\epsilon}\right).
\eenn   
Formally plugging this result into the slow equation yields
\benn
\dot{y}=g(\gamma_y(\tau/\epsilon),y).
\eenn
The idea is that the slow motion on the family of periodic orbits can be obtained by averaging out the fast oscillations. Hence we might consider
\be
\label{eq:avg_int1}
\dot{Y}=\bar{g}(Y):=\frac{1}{T_Y}\int_0^{T_Y}g(\gamma_Y(t),Y)dt.
\ee 
It will be convenient to make a change of variable $t=T_Y\theta$ and to set $\Gamma_Y(\theta)=\gamma_Y(T(Y)\theta)$. This transforms \eqref{eq:avg_int1} to
\be
\label{eq:avg_int2}
\dot{Y}=\int_0^1g(\Gamma_Y(\theta,Y))d\theta.
\ee 
Assume that the solution $Y(\tau)$ with initial condition $Y(0)=Y_0$ stays inside $D_0$ for $0\leq \tau\leq \tau_1$. Then a classical theorem shows that our averaging procedure really produces the correct result with an error of order $O(\epsilon)$.

\begin{thm}(\cite{PontryaginRodygin,BerglundGentz})
\label{thm:avg_fs}
Let $x_0$ be sufficiently close to $\Gamma_{Y_0}(\theta_0)$ for some $\theta_0$. Then there exists a function $\theta(\tau)$ that satisfies a differential equation of the form
\benn
\epsilon \dot{\theta}=\frac{1}{T_Y}+O(\epsilon).
\eenn
Furthermore the following estimates hold
\beann
x(\tau)&=&\Gamma_Y(\theta(\tau))+O(\epsilon),\\
y(\tau)&=&Y(\tau)+O(\epsilon),
\eeann
for $O(\epsilon|\log \epsilon|)\leq \tau\leq \tau_1$. 
\end{thm}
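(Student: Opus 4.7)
The plan is to treat this as a classical averaging result, and to break it into three stages: (i) reduce the fast dynamics to a normally hyperbolic invariant manifold on which the phase variable $\theta$ is well-defined, (ii) derive the equations governing $(\theta,y)$ on this manifold, and (iii) compare the slow equation to the averaged equation via a near-identity change of variables.

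First I would use the hypothesis that for each $y$ in a neighbourhood $D_0$ of $y_0$ the periodic orbit $\gamma_y$ is asymptotically stable. Standard persistence results for normally hyperbolic invariant manifolds (the periodic-orbit analogue of Fenichel's Theorem~\ref{thm:fenichel1}) give, for $\epsilon>0$ small, a locally invariant two-dimensional manifold $\mathcal{M}_\epsilon$ diffeomorphic to the cylinder $\bigcup_{y\in D_0}\gamma_y$ and at Hausdorff distance $O(\epsilon)$ from it. Because the orbits are attracting with eigenvalues bounded away from the imaginary axis, any initial condition $(x_0,Y_0)$ with $x_0$ sufficiently close to $\Gamma_{Y_0}(\theta_0)$ is drawn onto $\mathcal{M}_\epsilon$ at exponential rate in the fast time $t=\tau/\epsilon$; it follows that after a slow time of order $O(\epsilon|\log\epsilon|)$ the trajectory lies $O(\epsilon)$-close to $\mathcal{M}_\epsilon$. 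This is what forces the lower endpoint in the time interval for the estimates.

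Second, I would parametrize $\mathcal{M}_\epsilon$ by coordinates $(\theta,y)$, where $\theta\in\mathbb{R}/\mathbb{Z}$ is the rescaled phase along $\Gamma_y(\theta)=\gamma_y(T_y\theta)$. Expressing the vector field \eqref{eq:basic3} in these coordinates yields, to leading order,
\benn
\epsilon\dot\theta=\frac{1}{T_y}+O(\epsilon),\qquad \dot y=g(\Gamma_y(\theta),y)+O(\epsilon),
\eenn
since $\theta$ advances on the fast time scale while $y$ evolves only slowly; the $O(\epsilon)$ corrections are picked up from the fact that $\mathcal{M}_\epsilon$ is an $O(\epsilon)$-perturbation of the singular family, plus the contribution of the $y$-drift to the period. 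This produces the stated equation for $\theta(\tau)$ and justifies treating $g(\Gamma_y(\theta),y)$ as a quasi-periodic forcing of the slow equation with fast phase $\theta$.

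Third, I would compare $y(\tau)$ to the averaged trajectory $Y(\tau)$ defined by \eqref{eq:avg_int2}. The standard way is a near-identity averaging transformation: set $y=Y+\epsilon u(\theta,Y)$ with
\benn
u(\theta,Y)=T_Y\int_0^\theta\bigl(g(\Gamma_Y(\sigma),Y)-\bar g(Y)\bigr)\,d\sigma,
\eenn
which is periodic in $\theta$ because $\bar g$ is exactly the mean of $g$ along $\Gamma_Y$. Substituting into the $y$-equation and using $\epsilon\dot\theta=1/T_Y+O(\epsilon)$ shows that the residual is $O(\epsilon)$ uniformly on the compact set where $Y(\tau)\in D_0$. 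A Gronwall estimate on the interval $[0,\tau_1]$ then yields $|y(\tau)-Y(\tau)|=O(\epsilon)$, and substituting back into the $x$-equation gives $x(\tau)=\Gamma_{Y(\tau)}(\theta(\tau))+O(\epsilon)=\Gamma_y(\theta(\tau))+O(\epsilon)$.

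The main obstacle is not any single step but the uniform control of constants: one has to verify that the persistence of $\mathcal{M}_\epsilon$, the bound $T^a\le T_y\le T^b$, the hyperbolicity rates, and the averaging transformation $u$ all hold uniformly as $Y(\tau)$ varies over the compact set swept out on $[0,\tau_1]$. Granted this uniformity, the exponential attraction rate is what determines the $O(\epsilon|\log\epsilon|)$ initial transient, and the near-identity averaging delivers the $O(\epsilon)$ error over the entire slow interval.
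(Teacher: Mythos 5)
The paper gives no proof of Theorem~\ref{thm:avg_fs}: it is quoted as a classical result and attributed to the cited references, so there is nothing internal to compare your argument against. That said, your outline is essentially the standard proof as it appears in those sources (in particular Berglund--Gentz): persistence of the attracting invariant cylinder of periodic orbits explains the $O(\epsilon|\log\epsilon|)$ transient, the $(\theta,y)$ parametrization yields $\epsilon\dot\theta=1/T_Y+O(\epsilon)$, and the near-identity transformation $y=Y+\epsilon u(\theta,Y)$ followed by Gronwall gives the $O(\epsilon)$ estimates; the periodicity of $u$ in $\theta$ and the uniformity of constants over the compact set swept out by $Y(\tau)$ are exactly the points that need care, and you have flagged them correctly. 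I see no gap in the sketch.
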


Therefore we observe that the averaged systems \eqref{eq:avg_int1}-\eqref{eq:avg_int2} appear in the condition \eqref{eq:avg_cond}. This means that points on the periodic orbit contained in $\partial N$ are slow exit or entry points if the averaged flow is transverse to $\partial N$; we should view this averaged flow as a ``slow flow'' on the family of fast periodic orbits. Hence we have provided an analog for the condition of slow exit and entry points on the critical manifold.\\

The next generalization step is now obvious. Consider a general family of invariant sets (e.g. tori) in the fast susbsytem. We can again average over the invariant measure of this family in the case of periodic orbits; the transversality conditions of this averaged flow will be exactly analogous to the previous cases. In practical applications this scenario does not seem to be needed very often as it does require three or more fast dimensions or a family of fast subsystems with one additional free parameter beyond the $y$-variables to be generic. 

\section{Examples}
\label{sec:appl}

\subsection{The Van der Pol Equation}
\label{sec:VdP}

We re-consider Example \ref{ex:VdPb}. The time reversed version of Van der Pol's \cite{vanderPol,vanderPol1} equation is
\be
\label{eq:VdPsip}
\begin{array}{lcl}
x'&=&x^3/3-x-y,\\
y'&=&\epsilon x.\\
\end{array}
\ee

\begin{figure}[htbp]
\psfrag{x}{$x$}
\psfrag{y}{$y$}
\psfrag{C}{$C_0$}
	\centering
		\includegraphics[width=0.8\textwidth]{./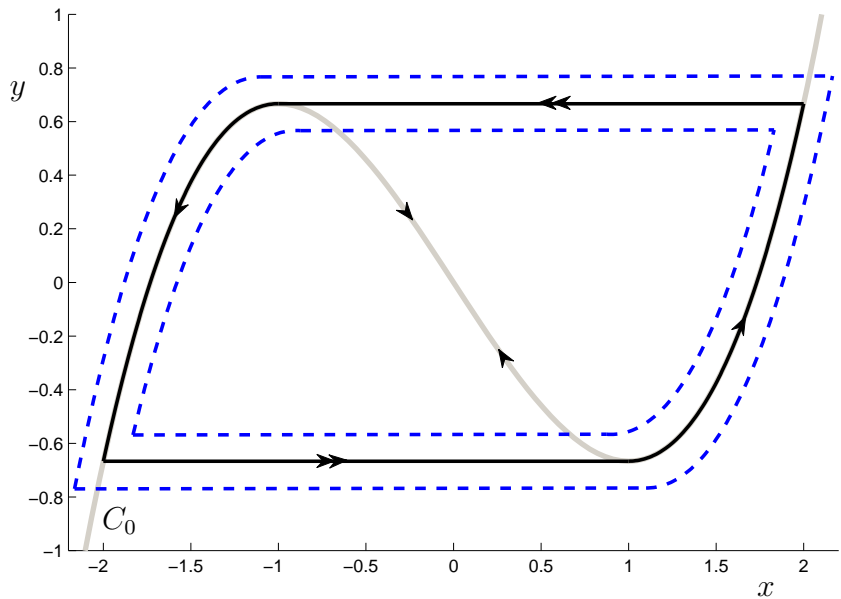}
	\caption{\label{fig:VdP}Critical manifold $C_0$ (grey), singular periodic orbit (black) and singular isolating neighbourhood (dashed blue) for Van der Pol's equation \eqref{eq:VdPsip}.}	
\end{figure}

The critical manifold $C_0$ of \eqref{eq:VdPsip} is given by 
\benn
C_0=\{(x,y)\in\R^2:y=x^3/3-x\}.
\eenn
Two fold points are located at $p_\pm=(\pm 1,\mp 2/3)$. They naturally split the critical manifold into three parts
\benn
C_l=C_0\cap \{x<-1\}, \quad C_m=C_0\cap \{-1\leq x \leq 1\}, \quad C_r=C_0\cap \{x>1\}.
\eenn
A singular periodic orbit for \eqref{eq:VdPsip} exists for $\epsilon=0$ consisting of concatenations of solutions of the fast and slow subsystems. We choose $N$ as a compact annulus containing the singular periodic orbit as indicated in Figure \ref{fig:VdP}. In this case, we have
\benn
S=\text{Inv}(N,\phi_0)=C_0\cap N.
\eenn

\begin{prop}
\label{prop:VdP}
$N$ is a singular isolating neighbourhood and $(N,\partial N)$ is a singular index pair.
\end{prop}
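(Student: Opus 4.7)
The plan is to deduce both claims from Theorem \ref{thm:sipmis} applied to $(N,\partial N)$, since the hypotheses of that theorem already force $N$ to be a singular isolating neighbourhood. First I would describe $S:=\text{Inv}(N,\phi_0)$. The fast subsystem $x'=x^3/3-x-y$ is a scalar cubic at each frozen $y$, so its bounded orbits consist of the equilibria on $C_0$ together with the two fast heteroclinic segments at the fold levels $y=\pm 2/3$, which connect the outer-branch points $(\mp 2,\pm 2/3)$ to the folds $p_\mp$. With $N$ the annular tube of Figure \ref{fig:VdP}, both heteroclinics and both folds lie in $\text{int}(N)$, so $S_\partial:=S\cap\partial N$ is a finite set lying on the normally hyperbolic arcs $C_l\cup C_r$ (and possibly on $C_m$ where the inner boundary of the tube meets the middle branch).

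Next I would apply Theorem \ref{thm:eepts} at every $z_0\in S_\partial$. Hypotheses (A1)--(A2) hold because $C_0$ is the graph $y=x^3/3-x$ and is normally hyperbolic away from the folds. The tube is drawn so that each intersection of $C_0$ with $\partial N$ lies on a segment of $\partial N$ parallel to the fast fibers $\{y=\text{const}\}$, giving (A3). The singular slow flow is $\dot{y}=g(x,y,0)=x$, purely vertical and therefore transverse to the horizontal pieces of $\partial N$, which is (A4). Theorem \ref{thm:eepts} then classifies each $z_0$ as a C-slow exit or strict slow entrance point according to the sign of $x$ at $z_0$ relative to the outward normal. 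Since the time-reversed limit cycle is repelling and $N$ isolates it from both sides, the slow flow points outward through $\partial N$ at every intersection: on the outer boundary the $C_l$ intersections ($x<0$, $\dot{y}<0$) and $C_r$ intersections ($x>0$, $\dot{y}>0$) both exit outward, and on the inner boundary the $C_m$ intersections are carried into the hole by the slow flow driving $x$ toward $0$. Thus $S_\partial=S_\partial^-$, and Theorem \ref{thm:sinbhd} immediately gives that $N$ is a singular isolating neighbourhood.

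The singular index pair claim then follows from Theorem \ref{thm:sipmis}. Hypothesis (A) holds because Theorem \ref{thm:eepts} produces C-slow exit points by construction; (B) is automatic from $S_\partial=S_\partial^-$; and (C) is vacuous since $S_\partial^{++}-S_\partial^-=\emptyset$. It remains to check that the prescribed set $L:=\rho(\text{cl}(N^-),N,\phi_0)\cup W^u_N\bigl(\bigcup_{z\in S_\partial^-}\mathcal{R}(K_z)\bigr)$ equals $\partial N$. The immediate exit set $N^-$ consists of the portions of $\partial N$ transverse to the fast direction along which $x'=x^3/3-x-y$ points outward --- the side walls of the tube connecting its horizontal caps --- so $\rho(\text{cl}(N^-),N,\phi_0)$ covers the part of $\partial N$ reachable from the side walls by the fast flow in $N$. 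Taking each slow-exit guide as $K_z=\{z\}$, one has $\mathcal{R}(K_z)=\{z\}$ and $W^u_N(\{z\})$ is the horizontal fast-unstable fiber through $z$, whose endpoints on $\partial N$ fill in the horizontal caps near the C-slow exit points. The main obstacle I foresee is exactly this last verification: confirming that the pushforward of $N^-$ together with the union of fast-unstable fibers through $S_\partial^-$ exhaust $\partial N$ (and not merely a proper subset) requires a careful geometric inspection of how the fast flow moves the side walls across $N$ and where the fast-unstable fibers land on $\partial N$.
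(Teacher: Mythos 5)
Your proposal follows essentially the same route as the paper: verify (A1)--(A4) at each point of $S\cap\partial N$, invoke Theorem \ref{thm:eepts} to conclude these are slow exit points, apply Theorem \ref{thm:sinbhd} to get a singular isolating neighbourhood, and then Theorem \ref{thm:sipmis} to identify $L=\partial N$. The only discrepancies are cosmetic: the paper pins $S\cap\partial N$ down as exactly four points (two on $C_m$, one each on $C_l$ and $C_r$), all of them slow exit points because the slow flow $\dot{y}=x$ points outward there (your phrase ``the slow flow driving $x$ toward $0$'' should read that it drives the point along $C_m$ toward the origin and hence into the hole), and the paper asserts $L=\partial N$ with no more detail than you supply.
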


\begin{proof}
Note that $\partial N \cap S$ consists of four points, two on $C_m$ and one point each on $C_l$ and $C_r$. Since (A1)-(A4) hold, Theorem \eqref{thm:eepts} implies that all four points are slow exit points as the slow flow is transverse at each point and pointing outwards with respect to $N$. Therefore Theorem \ref{thm:eepts} implies that $N$ is a singular isolating neighbourhood. Next, we can apply Theorem \ref{thm:sipmis} to see that $L=\partial N$ and so $(N,L)$ is a singular index pair.
\end{proof} 

A direct Conley index calculation for $(N,L)$, in combination with the existence of a Poincar\'{e} section, can now be used to show that for $\epsilon>0$ sufficiently the singular periodic orbit perturbs to a periodic orbit of the full system \eqref{eq:VdPsip}; see \cite{MischaikowMrozek} for the detailed calculation.\\

\textit{Remark:} Proposition \ref{prop:VdP} is well-known in Conley index and serves as one of the basic examples how to apply the theory to show the existence of a non-trivial invariant set in a dynamical system. We emphasize here that with our characterization of slow exit points in Theorem \ref{thm:eepts}, the problem has been reduced to the minimum amount of work regarding the checking of theorems; our transversality condition of the slow flow is much easier to understand and check than the conditions of Conley's Theorem \ref{thm:csept}. 

\subsection{A Bursting Model}

We consider a modified Morris-Lecar model first proposed by Rinzel and Ermentrout \cite{RinzelErmentrout}:
\bea
\label{eq:terman}
x_1'&=&y-0.5(x_1+0.5)-2x_2(x_1+0.7)-0.5\left(1+\tanh\left(\frac{x_1+0.01}{0.15}\right)\right)(x_1-1),\nonumber\\
x_2'&=&1.15\left(0.5\left(1+\tanh\left(\frac{x_1-0.1}{0.145}\right)\right)-x_2\right)\cosh\left(\frac{x_1-0.1}{0.29}\right),\\
y'&=&\epsilon(k-x_1)\nonumber,
\eea
where $k$ is a parameter and $0\leq \epsilon\ll 1$. We note that \eqref{eq:terman} exhibits special periodic orbits that are examples of bursting oscillations; see \cite{Izhikevich} for more details. Terman \cite{Terman} and Guckenheimer and Kuehn \cite{GuckenheimerKuehn2} investigated \eqref{eq:terman} further focusing on the deformation of the periodic orbits under parameter variation related to a phenomenon called ``spike adding''. We shall not discuss these results further but refer to the original references. The important point in the current context is that the periodic orbits play a key role in the dynamics. Our goal is to construct a singular isolating neighbourhood for \eqref{eq:terman}. In Figure \ref{fig:bif_diag} we show a bifurcation diagram for the fast subsystem
\bea
\label{eq:terman_fss}
\begin{array}{lcl}
x_1'&=&y-0.5(x_1+0.5)-2x_2(x_1+0.7)-0.5\left(1+\tanh\left(\frac{x_1+0.01}{0.15}\right)\right)(x_1-1),\\
x_2'&=&1.15\left(0.5\left(1+\tanh\left(\frac{x_1-0.1}{0.145}\right)\right)-x_2\right)\cosh\left(\frac{x_1-0.1}{0.29}\right),\\
\end{array}
\eea
where we regard $y$ as a parameter. The diagram shows the continuation of an equilibrium point which traces out a projection of the critical manifold $C_0=\{x_1'=0=x_2'\}$. All continuation calculations have been carried out using MatCont \cite{MatCont}. The two fold points $p_{l,r}$ are fold (or saddle-node) bifurcations of the fast subsystem and at these points normal hyperbolicity is lost. They are located at
\beann
p_l&\approx &(-0.0337,-0.0207,0.1365)=:(x_{1,l},x_{2,l},y_{l}),\\
p_r& \approx &(-0.2449,0.0832,0.0085)=:(x_{1,r},x_{2,r},y_{r}).
\eeann

\begin{figure}[htbp]
\centering
\psfrag{H}{Hopf}
\psfrag{LP}{fold point}
\psfrag{NS}{neutral saddle}
\psfrag{x2}{$y$}
\psfrag{y}{$x_2$}
\includegraphics[width=0.8\textwidth]{./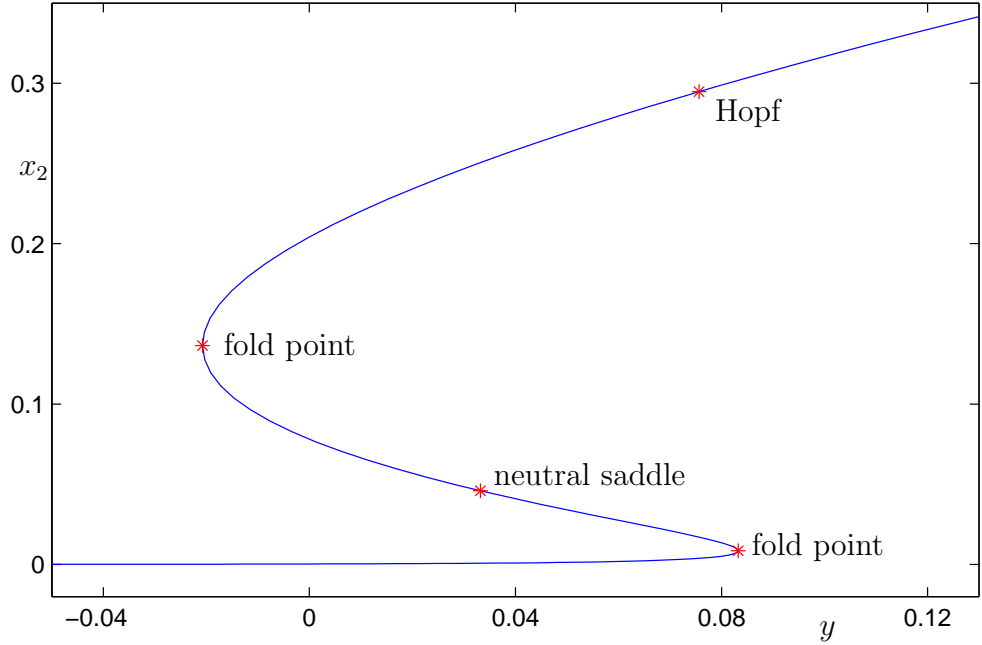}  
\caption{ \label{fig:bif_diag}Bifurcation diagram for the equilibria of \eqref{eq:terman_fss}. Note that the S-shaped curve of equilibria also represents the projection of the critical manifold into $(x_2,y)$-space.}
\end{figure}
  
The normally hyperbolic parts of $C_0$ are separated by the fold points into three branches
\benn
C_b=C_0\cap\{y<y_r\},\quad C_m=C_0\cap\{y_r< y< y_l\}, \quad C_u=C_0\cap\{y>y_l\}
\eenn
representing the lower, middle and upper parts of the S-shaped curve in Figure \ref{fig:bif_diag}. It is easy to check that $C_b$ is attracting and $C_m$ is of saddle-type. At $y_H\approx 0.075658$ we find a subcritical Hopf bifurcation of \eqref{eq:terman_fss}. The upper part of the critical manifold $C_u$ is repelling for $y<y_H$ and attracting for $y>y_H$. The unstable periodic orbits generated in the Hopf bifurcation of \eqref{eq:terman_fss} undergo further bifurcations as indicated in Figure \ref{fig:porbits}. The bursting periodic orbits for the full system contain the perturbation of a segment connecting $p_r$ to the stable periodic orbits in the fast subsystem at $y=y_r$.\\

\begin{figure}[htbp]
\centering
\includegraphics[width=0.8\textwidth]{./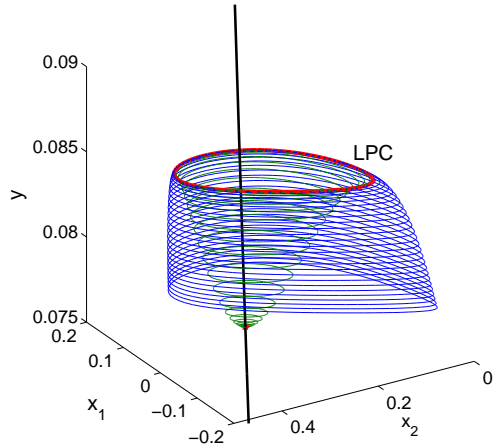}  
\caption{ \label{fig:porbits}Continuation of periodic orbits generated in the Hopf bifurcation for $y=y_H$. The smaller inner orbits (in green) are unstable and the larger outer orbits (in blue) are stable. Stability changes at a saddle-node of limit cycles (LPC, limit point of cycles, in red). The thick middle curve indicates part of $C_u$. Note that only a discrete number of periodic orbits are shown from the numerical calculation. }
\end{figure}

We shall not discuss how the periodic orbits connect back to $C_b$ (see \cite{GuckenheimerKuehn2}) but note that a compact set $N$ that is a potential singular isolating neighbourhood can have fast subsystem periodic orbits in $\partial N$. We shall focus on
\benn
N:=\{(x_1,x_2,y)\in\R^3:(x_1,x_2)\in K,-0.04\leq y\leq 0.084\}
\eenn 
for a suitably chosen compact rectangle $K\subset \R^2$ so that $N$ contains $p_l$ and $p_r$. Note that this choice of $N$ is only a first step in the analysis of bursting orbits and has to be refined to prove their existence; see the technical problems discussed in \cite{ConleyFastSlow1,ConleyFastSlow2}. The key point is that $\partial N\cap \{y=0.084\}$ contains two periodic orbits $\gamma_{1,2}$ for the fast subsystem. Figure \ref{fig:integrals}(a) shows the two periodic orbits.\\

\begin{figure}[htbp]
\centering
\psfrag{L1}{$\gamma_1$}
\psfrag{L2}{$\gamma_2$}
\psfrag{k}{$k$}
\psfrag{Int}{$I_j$}
\psfrag{x2}{$x_2$}
\psfrag{x1}{$x_1$}
\psfrag{(a)}{(a)}
\psfrag{(b)}{(b)}
\includegraphics[width=0.8\textwidth]{./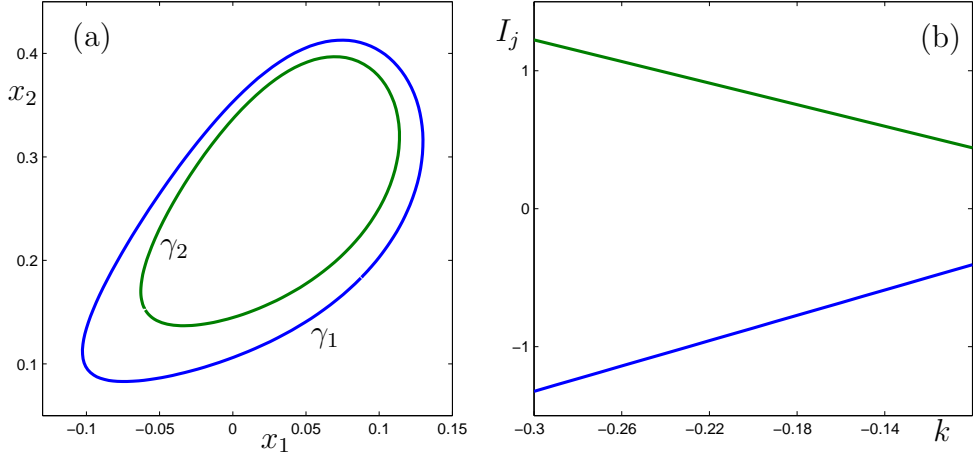}  
\caption{\label{fig:integrals} (a) Two periodic orbits in the fast subsystem at $y=0.084$. (b) Integrals given in \eqref{eq:int_terman} for the two periodic orbits depending on the parameter $k$.}
\end{figure}

From Proposition \ref{prop:porbit} we deduce that points on $\gamma_j$ for $j=1,2$ are slow exit/entry points depending on the sign of the integral
\be
\label{eq:int_terman}
I_j:=\int_0^{T_j} g(\gamma_j(s),0)ds=\int_0^{T_j} k-(\gamma_j(s))_1ds
\ee
where $T_j$ is the period of $\gamma_j$. Figure \ref{fig:integrals}(b) shows the value of $I_j$ for different values of $k\in[-0.3,-0.1]$ which are typical values within which bursting periodic orbits occur \cite{GuckenheimerKuehn2}. We find that in this parameter range the outer cycle $\gamma_1$ consists of slow entry points while the inner cycle $\gamma_2$ consists of slow exit points. Note that $\partial N\cap \{y=0.084\}$ also contains a point in $C_0$ at which the slow flow is transverse to $\partial N$. Furthermore we can choose the compact rectangle $K$ in the definition of $N$ so that there is only one more point in $\partial N\cap C_0$ lying on $C_b$. At this point the slow flow is again transverse to $\partial N$. By Theorem \ref{thm:eepts} both points are slow exit or entrance points. Next, we apply Theorem \ref{thm:sinbhd} to conclude that $N$ is a singular isolating neighbourhood in this case.\\

\textit{Remark:} Note that it is much more complicated to construct a singular index pair due to the presence of the Hopf bifurcation point of the fast subsystem on $C_0$. We postpone this question on how to modify $N$ to future work.  

\bibliographystyle{plain}
\bibliography{../my_refs}

\end{document}